\documentclass[reqno,oneside]{amsart}

\usepackage[english]{babel} 
\usepackage[latin1]{inputenc} 
\usepackage[T1]{fontenc} 
\usepackage{times}
\usepackage{amsmath,amsthm,amssymb,amsfonts,mathrsfs,latexsym} 
\usepackage{graphicx} 
\usepackage{verbatim} 
\usepackage[all]{xy} 
\usepackage[colorlinks, linkcolor=blue, citecolor=blue, urlcolor=blue, hypertexnames=true]{hyperref}  
\usepackage{multirow}

\newtheorem{thm1}{Theorem}
\newtheorem{thm}{Theorem}[section]
\newtheorem{lemA}{Lemma}
\newtheorem{lem}[thm]{Lemma}
\newtheorem{cor}[thm]{Corollary}

\theoremstyle{definition}

\newtheorem{rem}[thm]{Remark}
\newtheorem*{rem*}{Remark}

\newcommand{\mc}[1]{\mathcal{#1}}
\newcommand{\mf}[1]{\mathfrak{#1}}
\newcommand{\mr}[1]{\mathrm{#1}}
\newcommand{\C}{\mathbb{C}}
\newcommand{\N}{\mathbb{N}}
\newcommand{\R}{\mathbb{R}}
\renewcommand{\epsilon}{\varepsilon}
\renewcommand{\phi}{\varphi}
\renewcommand{\tilde}{\widetilde}
\renewcommand{\hat}{\widehat}
\DeclareMathOperator{\GL}{GL}
\DeclareMathOperator{\SL}{SL}
\DeclareMathOperator{\SO}{SO}
\DeclareMathOperator{\SU}{SU}
\DeclareMathOperator{\Sp}{Sp}
\renewcommand{\Re}{\mr{Re}}

\newcommand{\id}{\mr{id}}
\newcommand{\diag}{\mr{diag}}
\newcommand{\supp}{\mr{supp}}
\newcommand{\Span}{\mathrm{span }}

\DeclareFontFamily{U}{mathx}{\hyphenchar\font45}
\DeclareFontShape{U}{mathx}{m}{n}{
      <5> <6> <7> <8> <9> <10>
      <10.95> <12> <14.4> <17.28> <20.74> <24.88>
      mathx10
      }{}
\DeclareSymbolFont{mathx}{U}{mathx}{m}{n}
\DeclareFontSubstitution{U}{mathx}{m}{n}
\DeclareMathAccent{\widecheck}{0}{mathx}{"71}
\DeclareMathAccent{\wideparen}{0}{mathx}{"75}
\renewcommand\check\widecheck
\renewcommand\tilde\widetilde

\numberwithin{equation}{section}
\parindent 0cm
\parskip 4pt

\begin{document}
\selectlanguage{english} 

\begin{abstract}
It is proved that:
\begin{enumerate}
  \item The Fourier algebra $A(G)$ of a simple Lie group $G$ of real rank at least $2$ with finite center does not have a multiplier bounded approximate unit.
  \item The reduced $C^*$-algebra $C_r^*(\Gamma)$ of any lattice $\Gamma$ in a non-compact simple Lie group of real rank at least $2$ with finite center does not have the completely bounded approximation property.
\end{enumerate}
Hence, the results obtained by  J.~de~Canniere and the author \cite{MR784292} for $\SO_{e}(n,1)$, $n\geq 2$, and by M.~Cowling \cite{MR748862} for $\SU(n,1)$ do not generalize to simple Lie groups of real rank at least $2$.
\end{abstract}


\title{
Group C$^*$-algebras without the completely bounded approximation property
}
\author{Uffe Haagerup}
\date{\today}
\address{%
Department of Mathematics and Computer Science, University of Southern Denmark
\newline Campusvej 55, 5230 Odense M, Denmark}

\maketitle

\section*{Preamble by Alain Valette}
\begin{quotation}
In spring 2015, I contacted Uffe Haagerup about his manuscript ``Group C$^*$-algebras without the completely bounded approximation property'', written in 1986, and never published. I mentioned that Journal of Lie Theory might be a good place to publish it. Uffe Haagerup liked the idea and said he was willing to update the paper after the summer.

By a sad twist of fate, Uffe Haagerup tragically passed away in July 2015. After his untimely death, Maria Ramirez Solano volunteered to type the manuscript, and S\o ren Knudby accepted to write an introduction and update the bibliography. We heartily thank both of them for their help in making the manuscript available to the community. We also thank S\o ren Haagerup for giving us permission to publish his father's paper.
\end{quotation}

\section*{Introduction}
The Fourier algebra $A(G)$ of a locally compact group, introduced by Eymard \cite{MR0228628}, consists of the matrix coefficients of the regular representation. The Fourier algebra is the predual of the group von Neumann algebra $\mf M(G)$ generated by the regular representation. The multipliers $MA(G)$ of the Fourier algebra consists of those continuous functions $\phi$ on $G$ such that $\phi\psi\in A(G)$ for every $\psi\in A(G)$. One identifies $\phi$ with the corresponding operator $m_\phi$ on $A(G)$, and the multiplier norm $\|\phi\|_{MA}$ is the operator norm of $m_\phi$. If the transposed operator $m_\phi^*$ on $\mf M(G)$ is completely bounded, we say that $\phi$ is a completely bounded multiplier. The space of completely bounded multipliers is denoted $M_0A(G)$, and the completely bounded multiplier norm $\|\phi\|_{M_0A}$ is the completely bounded operator norm of $m_\phi^*$. We refer to the papers \cite{MR1120720,MR784292} for details.

Leptin \cite{MR0239002} showed that the Fourier algebra $A(G)$ has an approximate unit bounded in norm if and only if $G$ is amenable. In \cite{MR784292}, de~Canni{\`e}re and the author showed that the Fourier algebra of the non-amenable group $\SO_e(n,1)$, $n\geq 2$, admits an approximate unit bounded in the completely bounded multiplier norm. In \cite{MR748862}, Cowling obtained similar results for $\SU(n,1)$. In the first half of the paper, we show these results do not generalize to simple Lie groups of real rank at least $2$:

\begin{thm1}\label{thm:1}
The Fourier algebra $A(G)$ of a simple Lie group $G$ of real rank at least $2$ with finite center does not have an approximate unit bounded in multiplier norm.
\end{thm1}

The second half of the paper is concerned with applications to operator algebras. It is shown that the Fourier algebra $A(\Gamma)$ of a lattice $\Gamma$ in a second countable locally compact group $G$ has an approximate unit bounded in the completely bounded multiplier norm if and only if the Fourier algebra $A(G)$ of $G$ has such an approximate unit (Theorem~\ref{thm:lattice}). It is also shown that, for a discrete group $\Gamma$, the Fourier algebra $A(\Gamma)$ has an approximate unit bounded in the completely bounded multiplier norm if and only if the reduced group C$^*$-algebra $C^*_r(\Gamma)$ has the completely bounded approximation property, if and only if the group von Neumann algebra $\mf M(\Gamma)$ has the ($\sigma$-weak) completely bounded approximation property (Theorem~\ref{thm:operatoralgebras}). As a corollary, $C^*_r(\Gamma)$ does not have the completely bounded approximation property when $\Gamma$ is a lattice in a simple Lie group of real rank at least $2$ with finite center (Corollary~\ref{cor:27}).

The paper ends with an appendix containing a characterization, due to Bo\.zejko and Fendler, of completely bounded multipliers.

A preliminary version of this paper was completed in 1986 and has circulated among experts in the field. We now mention some of the developments related to this paper up until its publication.

In 1989, Cowling and the author \cite{MR996553} showed that the Fourier algebra of a simple Lie group with finite center and real rank 1 admits an approximate unit bounded in the completely bounded multiplier norm. This generalized the results of \cite{MR748862,MR784292}. The condition of finite center was subsequently removed by Hansen \cite{MR1079871}.

In 1996, Dorofaeff \cite{MR1245415,MR1418350} removed the finite center condition from Theorem~\ref{thm:1}, thus giving a complete characterization of simple Lie groups whose Fourier algebras admits multiplier bounded approximate units.

In 2005, Cowling, Dorofaeff, Seeger, and Wright \cite{MR2132866} extended the previous results to cover many non-simple Lie groups including all real algebraic linear groups.

In 2012, Ozawa gave a short proof of Theorem~\ref{thm:1} (see \cite{MR2914879} and \cite{K-wa-sl2}).

In 1994, a weaker approximation property (called the Approximation Property or simply the AP) than the one considered in Theorem~\ref{thm:1} was introduced by Kraus and the author in \cite{MR1220905}. In 2011--2013, it was shown by Lafforgue, de~la~Salle, de~Laat and the author \cite{MR3047470,MR2838352} that simple Lie groups $G$ of real rank at least $2$ with finite center do not even have the AP, thus improving Theorem~\ref{thm:1}. The finite center condition was subsequently removed in \cite{MR3453357}.

\section{Proof of Theorem~\ref{thm:1}}\label{sec:Lie-groups}
\subsection*{Reducing the problem to \texorpdfstring{$\SL(3,\R)$}{SL(3,R)} and \texorpdfstring{$\Sp(2,\R)$}{Sp(2,R)}}
In \cite{MR0259023}, Wang proved that any simple Lie group $G$ with finite center and real rank at least $2$ has Kazhdan's property $(T)$, by using the fact that all these groups contain a closed subgroup $G'$ with finite center and locally isomorphic to either $\SL(3,\R)$ or $\SO(2,3)$ (cf. \cite[Theorem 7.2]{MR0207712}). If $G'$ fails to have a multiplier bounded approximate unit for its Fourier algebra, so does $G$ (cf. \cite[Proposition~1.12]{MR784292}). It is elementary to check, that if $F$ is a finite normal subgroup of a locally compact group $H$, then $A(H)$ has a multiplier bounded approximate unit if and only if $A(H/F)$ has a multiplier bounded approximate unit. Thus if $G'$ and $G''$ are locally isomorphic simple Lie groups with finite center, then $A(G')$ has a multiplier bounded approximate unit if and only if $A(G'')$ has a multiplier bounded approximate unit. Hence, to prove Theorem~\ref{thm:1} it suffices to consider the two groups $\SL(3,\R)$ and $\SO(2,3)$ of real rank $2$. But $\SO(2,3)$ is locally isomorphic to $\Sp(2,\R)$ (cf. \cite[p.~519]{MR514561}). So we can as well work with $\SL(3,\R)$ and $\Sp(2,\R)$.

\subsection*{Case \texorpdfstring{$\SL(3,\R)$}{SL(3,R)}}
Consider the closed subgroup $G_0$ of $\SL(3,\R)$ consisting of the $3\times 3$-matrices of the form

$$
\left(
  \begin{array}{c@{}c}
    A & \begin{array}{c}
          b_1 \\
          b_2 
        \end{array}
     \\
    \begin{array}{cc}
       0 & 0 
     \end{array}
     & 1 \\
  \end{array}
\right)
\qquad A\in \SL(2,\R),\,\,\, b_1,b_2\in\R.
$$
Note that $G_0$ is isomorphic to the semidirect product $\SL(2,\R)\times_{\alpha} \R^2$, where $\alpha$ is the usual action of $\SL(2,\R)$ on $\R^2$. We will show that already $G_0$ fails to have a multiplier bounded approximate unit.
Put 
$$
K_0=\left(
  \begin{array}{c@{}c}
    \SO(2) & \begin{array}{c}
          0 \\
          0
        \end{array}
     \\
    \begin{array}{cc}
       0 & 0
     \end{array}
     & 1 \\
  \end{array}
\right),
$$
and let $N\subseteq G_0$ be the nilpotent group of upper triangular matrices with 1's in the diagonal ($N$ is the Heisenberg group). It is convenient to use the following coordinates for $N$:

$$
\left(
  \begin{array}{ccc}
    1 & x & z+\frac12 xy \\
    0 & 1 & y \\
    0 & 0 & 1 \\
  \end{array}
\right), \qquad (x,y,z)\in \R^3.
$$

\begin{lemA}\label{l:LemmaA}
  Let $\gamma$ be the diffeomorphism of $N$ given by
  $$\gamma(x,y,z)=\left(-x,\ -\frac{z}{\sqrt{1+x^2/4}},\ y\sqrt{1+x^2/4}\right). $$
  If $\phi\colon N\to\C$ is the restriction of a $K_0$-biinvariant function $\tilde \phi$ on $G_0$ to $N$ then $\phi=\phi\circ\gamma$.
\end{lemA}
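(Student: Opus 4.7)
My plan is to exhibit, for each $(x,y,z) \in \R^3$, explicit elements $k_1, k_2 \in K_0$ such that $k_1 \cdot n(x,y,z) \cdot k_2 = n(\gamma(x,y,z))$ inside $G_0$; the lemma then follows immediately from the $K_0$-biinvariance of $\tilde\phi$.

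Working in the semidirect product picture $G_0 \cong \SL(2,\R) \ltimes \R^2$, I will encode $n(x,y,z) = (A(x), b(x,y,z))$ where
\[
A(x) = \begin{pmatrix} 1 & x \\ 0 & 1 \end{pmatrix}, \qquad b(x,y,z) = \begin{pmatrix} z + xy/2 \\ y \end{pmatrix},
\]
so that $K_0 \cong \SO(2)$ acts by $k \cdot (A,b) \cdot k' = (kAk',\, kb)$. A short calculation shows that $n(\gamma(x,y,z))$ corresponds to the pair $(A(-x), b')$ with $b' = (ys + xz/(2s),\, -z/s)^T$, where $s = \sqrt{1+x^2/4}$. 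The task therefore reduces to simultaneously solving $k_1 b = b'$ and $k_1 A(x) k_2 = A(-x)$ with $k_1, k_2 \in \SO(2)$.

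The equation $k_1 b = b'$ forces $\|b\| = \|b'\|$, which I would verify first: both sides expand to $z^2 + xyz + x^2 y^2/4 + y^2$. This identity makes the rotation $k_1 = R_\theta$ uniquely determined by $\cos\theta = x/(2s)$ and $\sin\theta = -1/s$; the normalization $\cos^2\theta + \sin^2\theta = 1$ amounts exactly to $(x^2 + 4)/(4s^2) = 1$. With $k_1$ fixed, the second equation forces $k_2 = A(-x) R_{-\theta} A(-x)$ (using $A(x)^{-1} = A(-x)$), and a direct matrix multiplication yields
\[
k_2 = \begin{pmatrix} -x/(2s) & -1/s \\ 1/s & -x/(2s) \end{pmatrix},
\]
which lies in $\SO(2)$ by the same trigonometric relation $x^2/(4s^2) + 1/s^2 = 1$.

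I expect the main subtlety to be the interlocking of the two conditions: the rotation $k_1$ is essentially prescribed by the action on $\R^2$, yet it must simultaneously send $A(x)$ into $\SO(2) \cdot A(-x) \cdot \SO(2)$. This compatibility is not automatic from the fact that $A(x)$ and $A(-x)$ have identical singular values; rather, the precise formula for $\gamma$ appears to have been reverse-engineered so that the same rotation $R_\theta$ satisfies both constraints. Once both are verified by the elementary computations above, the identity $k_1 \cdot n(x,y,z) \cdot k_2 = n(\gamma(x,y,z))$ follows from the semidirect product multiplication law, and the lemma is complete.
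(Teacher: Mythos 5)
Your proposal is correct and is essentially the paper's own argument: you produce explicit rotations $k_1,k_2\in\SO(2)$ with $k_1\,n(x,y,z)\,k_2=n(\gamma(x,y,z))$, and your $k_1$ and $k_2$ are exactly the matrices $u$ and $v=-u$ that the paper writes down (with $\cos\theta=x/(2s)$, $\sin\theta=-1/s$, $s=\sqrt{1+x^2/4}$). The computations you outline (the norm identity $\|b\|=\|b'\|$, the verification $k_1b=b'$, and that $A(-x)R_{-\theta}A(-x)\in\SO(2)$) all check out, so the only difference is that you derive $u,v$ rather than stating them and verifying directly.
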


\begin{proof}
  Define $u,v\in\SO(2)$ by

  $$u=\frac{1}{\sqrt{1+x^2/4}}\left(
                                       \begin{array}{cc}
                                         \frac x2 & 1 \\
                                         -1 & \frac x2 \\
                                       \end{array}
                                     \right)\qquad\text{and}\qquad v=-u.
                                     $$
  By direct computation one gets:
$$
\left(
  \begin{array}{cc}
    u & 0 \\
    0 & 1 \\
  \end{array}
\right)
\left(
  \begin{array}{ccc}
    1 & x & z+\frac12 xy \\
    0 & 1 & y \\
    0 & 0 & 1 \\
  \end{array}
\right)
\left(
  \begin{array}{cc}
    v & 0 \\
    0 & 1 \\
  \end{array}
\right)=
\left(
  \begin{array}{ccc}
    1 & x' & z'+\frac12 x'y' \\
    0 & 1 & y' \\
    0 & 0 & 1 \\
  \end{array}
\right)
$$            
where $(x',y',z')=\gamma(x,y,z)$. This proves the lemma.                       
\end{proof}

\begin{lemA}[``Failure of Fubini's Theorem'']\label{l:LemmaB}
  Let $\phi\in C_c^\infty(\R^2)$. Then the two double integrals
  $$I(\phi)=\int_{\R}\int_{\R}\frac{\phi(y,z)}{y^2-z^2}\,dz\,dy,\qquad J(\phi)=\int_{\R}\int_{\R}\frac{\phi(y,z)}{y^2-z^2}\,dy\,dz
  $$
  exist, when the inner integrals are taken in the principal value sense (exclude symmetric intervals around the zeroes of $y^2-z^2$, and let the length of the intervals go to zero). Moreover
  $$I(\phi)-J(\phi)=\pi^2\phi(0,0).$$
\end{lemA}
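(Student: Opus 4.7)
The plan is to view $T(\phi):=I(\phi)-J(\phi)$ as a distribution on $\R^2$, show it is supported at the origin and scale-invariant, conclude that $T=c\,\delta_0$ for some constant $c$, and then identify $c=\pi^2$ by direct evaluation on one convenient test function. To begin, one must verify $I(\phi)$ and $J(\phi)$ are well defined. Using the partial fraction
\[
\frac{1}{y^2-z^2}=\frac{1}{2y}\left(\frac{1}{y-z}+\frac{1}{y+z}\right)\qquad(y\neq 0),
\]
the inner integral can be written $F(y)=\frac{1}{2y}\bigl[\Phi(y,y)-\Phi(y,-y)\bigr]$, where $\Phi(y,a)=\text{PV}_z\!\int \frac{\phi(y,z)}{a-z}\,dz$ is smooth in $(y,a)$. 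Since $\Phi(y,y)-\Phi(y,-y)$ has a first-order zero at $y=0$, the function $F$ extends smoothly across $y=0$ and decays rapidly (as $\phi$ is compactly supported), so $I(\phi)=\int F\,dy$ is absolutely convergent; the same analysis handles $J(\phi)=\int G\,dz$.

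Next, for each $\epsilon>0$ the regularization $I_\epsilon(\phi):=\iint_{|y-z|>\epsilon,\,|y+z|>\epsilon}\frac{\phi(y,z)}{y^2-z^2}\,dy\,dz$ is an absolutely convergent double integral, so Fubini gives $I_\epsilon(\phi)=J_\epsilon(\phi)$ for every $\epsilon$. If $\phi\in C_c^\infty(\R^2\setminus\{0\})$, dominated convergence---justified by uniform-in-$\epsilon$ bounds on the regularized inner integrals $y\mapsto I_\epsilon(\phi)(y)$ (using that $\mathrm{supp}(\phi)$ stays away from the origin so that the Hilbert-transform-type contributions are controlled by $\|\phi(y,\cdot)\|_{C^1}$)---yields $I(\phi)=\lim_\epsilon I_\epsilon(\phi)=J(\phi)$. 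Hence $T$ vanishes on $C_c^\infty(\R^2\setminus\{0\})$, i.e., $T$ is supported at the origin. A direct change of variables $(y,z)\mapsto(y/\lambda,z/\lambda)$ shows $I(\phi)$ and $J(\phi)$ are unchanged under the dilation $\phi\mapsto\phi(\lambda\cdot,\lambda\cdot)$ for every $\lambda>0$, so $T$ is homogeneous of degree zero. By Schwartz's classification of distributions supported at a single point, combined with this homogeneity (which forces the coefficient of each $\partial^\alpha\delta_0$ with $|\alpha|\geq 1$ to vanish), $T=c\,\delta_0$ for some constant $c$.

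To identify $c$, I evaluate on the Schwartz function $\phi(y,z)=\frac{1}{(1+y^2)(1+z^2)}$, to which all the formulas extend by continuity (one can also approximate by compactly supported cutoffs). Residue calculus gives $\text{PV}\!\int\frac{dz}{(a-z)(1+z^2)}=\frac{\pi a}{1+a^2}$, so $\Phi(y,\pm y)=\pm\frac{\pi y}{(1+y^2)^2}$, whence $F(y)=\frac{\pi}{(1+y^2)^2}$ and $I(\phi)=\pi\int(1+y^2)^{-2}\,dy=\frac{\pi^2}{2}$. The symmetry $\phi(y,z)=\phi(z,y)$ combined with the antisymmetry $\frac{1}{y^2-z^2}=-\frac{1}{z^2-y^2}$ under $y\leftrightarrow z$ yields $J(\phi)=-I(\phi)=-\frac{\pi^2}{2}$, so $T(\phi)=\pi^2=\pi^2\phi(0,0)$, confirming $c=\pi^2$. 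The main technical obstacle is the dominated-convergence step in the middle paragraph: one must produce uniform-in-$\epsilon$ $L^1$-estimates for $y\mapsto I_\epsilon(\phi)(y)$ when $\phi$ is supported away from the origin, which is what ensures $T$ is actually supported at the origin and thus unlocks the entire distributional argument.
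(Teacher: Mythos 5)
Your argument is correct in outline but follows a genuinely different route from the paper. The paper's (also sketched) proof is Fourier-analytic: it computes the Fourier transforms of $I$ and $J$ as tempered distributions, $\hat I(t,u)=\pi^2\chi_{\{u^2>t^2\}}$ and $\hat J(t,u)=-\pi^2\chi_{\{u^2<t^2\}}$, and reads off $I-J=\pi^2\delta_0$ from $\hat I-\hat J=\pi^2$. You instead avoid Fourier analysis entirely: Fubini on the $\epsilon$-truncated regions (which are literally the same set for $I$ and $J$) shows $T=I-J$ vanishes on $C_c^\infty(\R^2\setminus\{0\})$, dilation invariance of the truncation kills all terms $\partial^\alpha\delta_0$ with $|\alpha|\ge 1$ in Schwartz's structure theorem, and a clean residue computation on $(1+y^2)^{-1}(1+z^2)^{-1}$ pins down the constant; your value $\pi^2$ agrees with the paper. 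What your approach buys is elementariness and an explicit well-definedness argument (the partial-fraction identity $F(y)=\tfrac1{2y}[\Phi(y,y)-\Phi(y,-y)]$, which the paper omits); what it costs is that the load-bearing step is now the uniform-in-$\epsilon$ bound on the truncated inner integrals for $\phi$ supported off the origin, which you correctly identify and sketch but do not carry out --- it does work, since off the origin the two singular lines separate and each contributes a simple pole whose symmetric truncations are bounded by local $C^1$ norms of $\phi$ via the standard Taylor-subtraction trick. Two small repairs: the functional identity $T(\phi(\lambda\,\cdot))=T(\phi)$ makes $T$ homogeneous of degree $-2$ as a distribution (not degree zero), which is exactly the degree of $\delta_0$ in $\R^2$, so the conclusion stands; and the evaluation on the non-compactly-supported function $(1+y^2)^{-1}(1+z^2)^{-1}$ needs the cutoff-approximation argument you allude to (or, to stay strictly inside $C_c^\infty$, one can multiply by a bump equal to $1$ on a large ball and control the tails, where the integrand is $O(|z|^{-4})$). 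Neither point affects the validity of the approach, and the continuity of $T$ on $C_c^\infty$ needed to invoke the structure theorem follows from the same $C^2$ estimates as your well-definedness paragraph.
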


\begin{proof}[Proof (sketch).]
$I$, $J$ are the distributions on $\R^2$, whose Fourier transforms are given by the $L^\infty$-functions
\begin{eqnarray*}
  &&\hat I (t,u)= \pi^2 \chi_{\{u^2>t^2\}},\\
  &&\hat J(t,u) = -\pi^2\chi_{\{u^2<t^2\}},
\end{eqnarray*}
where $\chi$ denotes the characteristic function, and $D\to \hat D$ is the natural extension of the Fourier transform on $\R^2$, which on the $L^1$-functions is given by
$$\hat f(t,u)=\iint_{\R^2} e^{i(ty+uz)}f(y,z)\,dy\,dz.$$
Hence $\hat I-\hat J = \pi^2$, which is the Fourier transform of $\pi^2$ times the Dirac measure at $(0,0)$.
\end{proof}

\begin{lemA}\label{l:LemmaC}
The map
$$\phi\mapsto \iint_{\R^2} \frac{\phi(y,z)}{1+y^2-z^2}\,dy\,dz,\qquad \phi\in C_c^\infty(\R^2)$$
is a well-defined distribution $K$ on $\R^2$, independent of the order of integration, and $\hat K$ is the $L^\infty$-function
$$\hat K(t,u) =\left\{
                 \begin{array}{cc}
                   J_0(\sqrt{u^2-t^2}) & u^2>t^2 \\
                   0 & u^2<t^2 \\
                 \end{array}
               \right.
$$
where $J_0$ is the zero-order Bessel function.
\end{lemA}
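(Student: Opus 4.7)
The plan is to establish the three assertions --- well-definedness of the principal value in each order of integration, equality of the two orders (in contrast to Lemma B, which is made possible by the fact that the zero set $\{1+y^2-z^2=0\}$ is a smooth non-singular hyperbola rather than two crossing lines), and the explicit formula for $\hat K$ --- by direct computation using partial fractions in the inner variable and a classical Mehler--Sonine identity.

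For well-definedness, observe that for fixed $y$ the polynomial $1+y^2-z^2$ has two simple zeros at $z=\pm\sqrt{1+y^2}$, and partial fractions give
$$\frac{1}{1+y^2-z^2} \;=\; \frac{1}{2\sqrt{1+y^2}}\Bigl(\frac{1}{z+\sqrt{1+y^2}}-\frac{1}{z-\sqrt{1+y^2}}\Bigr).$$
The inner principal value integral is therefore the difference of two Hilbert-transform evaluations of $\phi(y,\cdot)$ at $\pm\sqrt{1+y^2}$, divided by $2\sqrt{1+y^2}$; this is continuous in $y$, bounded by a Schwartz seminorm of $\phi$ times $1/\sqrt{1+y^2}$, and vanishes outside the $y$-projection of $\mathrm{supp}\,\phi$, so the outer $y$-integral converges absolutely and $K$ is a distribution. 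The symmetric argument with the roles of $y$ and $z$ exchanged (where the $y$-zeros $\pm\sqrt{z^2-1}$ appear only for $|z|>1$, and for $|z|<1$ the inner integral is absolutely convergent) handles the other order. That both orders produce the same number is proved by regularizing to $\{|1+y^2-z^2|>\epsilon\}$, applying ordinary Fubini, and letting $\epsilon\downarrow 0$; the symmetric $\epsilon$-cutoffs along each branch of the smooth hyperbola contribute identically in either order.

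To compute $\hat K$, apply the same partial fraction decomposition to $e^{iuz}$ and use the elementary principal value $\mathrm{p.v.}\int e^{iuz}/(z-a)\,dz = i\pi\,\mathrm{sgn}(u)\,e^{iua}$ to obtain
$$\mathrm{p.v.}\int_{\mathbb R}\frac{e^{iuz}}{1+y^2-z^2}\,dz \;=\; \frac{\pi\,\mathrm{sgn}(u)\sin(u\sqrt{1+y^2})}{\sqrt{1+y^2}}.$$
Integrating against $e^{ity}$ in $y$ and using evenness reduces $\hat K(t,u)$ to a constant multiple of $\int_0^\infty \cos(ty)\sin(u\sqrt{1+y^2})/\sqrt{1+y^2}\,dy$, which by the classical Mehler--Sonine--Weber identity (e.g.\ Gradshteyn--Ryzhik 6.677.6) equals a multiple of $J_0(\sqrt{u^2-t^2})$ on $\{u^2>t^2\}$ and vanishes on $\{u^2<t^2\}$, matching the stated $L^\infty$ formula. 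The main obstacle is the Mehler--Sonine identity itself; I would prove it via the substitution $y=\sinh\phi$ (converting the integral to the oscillatory $\int_0^\infty \cos(t\sinh\phi)\sin(u\cosh\phi)\,d\phi$) combined with the classical integral representation of $J_0$ inside the light cone, or equivalently recognize $K$ through its Fourier transform as a fundamental solution inside the forward light cone for the Klein--Gordon-type operator $1+\partial_u^2-\partial_t^2$, for which the $J_0$-profile is standard.
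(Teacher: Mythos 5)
Your approach is correct but genuinely different from the paper's. The paper's sketch is a symmetry argument: since $1+y^2-z^2$ and Lebesgue measure are $\SO(1,1)$-invariant, $\hat K$ can only depend on $u^2-t^2$, so it suffices to evaluate $\hat K$ on the two axes; $\hat K(t,0)=0$ because the inner principal value $\int dz/((1+y^2)-z^2)$ vanishes, and $\hat K(0,u)$ reduces, via $\int_{\R}dy/(1+y^2-z^2)=\pi(1-z^2)^{-1/2}$ for $|z|<1$ (and principal value $0$ for $|z|>1$), to the elementary Poisson-type integral representation of $J_0$ over $[-1,1]$. You instead compute the full double Fourier integral by partial fractions, which forces you to invoke the Mehler--Sonine--Weber identity --- precisely the off-axis information that the $\SO(1,1)$ reduction lets the paper avoid; your fallback of recognizing $\hat K$ as the fundamental solution of $1+\partial_u^2-\partial_t^2$ supported in the light cone is legitimate and is in fact the conceptual content behind the invariance trick. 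Two caveats. First, constants: carried out carefully, both your computation and the paper's give $\hat K(t,u)=\pi^2 J_0(\sqrt{u^2-t^2})$ on $\{u^2>t^2\}$ (note that $\int_{-1}^1 e^{iux}(1-x^2)^{-1/2}\,dx$ equals $\pi J_0(u)$, not $J_0(u)$); the factor $\pi^2$ is dropped in the statement as printed but is the source of the $2\pi^2$ in the kernel of Lemma~\ref{l:LemmaD}, so your noncommittal ``multiple of $J_0$'' is the honest answer and should be pinned down. Second, your ``symmetric argument'' for the reversed order is not quite symmetric: for $|z|$ slightly below $1$ the inner $y$-integral is absolutely convergent but blows up like $\pi\phi(0,z)/\sqrt{1-z^2}$, so it is only locally integrable in $z$ rather than bounded; the outer integral still converges and the conclusion stands, but this asymmetry between the two orders deserves an explicit sentence.
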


\begin{proof}[Proof (sketch).]
  To compute $\hat K$, observe that $K$ is $\SO(1,1)$-invariant. Hence $\hat K$ can only depend on $u^2-t^2$,
  so it is sufficient to compute $\hat K(t,0)$ and $\hat K(0,u)$. But $\hat K(0,u)=J_0(u)$ follows from the formula
  $$J_0(u)=\int_{-1}^1 \frac 1{\sqrt{1-x^2}}\ e^{iux} \,dx.$$
\end{proof}

\begin{lemA}\label{l:LemmaD}
  Let $D$ be the distribution on $N$ given by
  $$D(\phi)= \int_{-\infty}^{\infty}\int_{-\infty}^{\infty}\int_{-\infty}^{\infty}\frac{\phi(x,y,z)}{(1+x^2/4)y^2-z^2}\,dz\,dy\,dx,\qquad \phi\in C_c^\infty(N)
  $$
  Then
  $$|D(\phi)| \le 2\pi^3 \|\phi\|_{A(N)} \quad \forall \phi \in C_c^\infty(N).
  $$
\end{lemA}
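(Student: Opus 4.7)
The plan is to pass to the dual side. Since $A(N)^* = \mf M(N)$ isometrically, it suffices to show that $D$ defines an element of $\mf M(N)$ of norm at most $2\pi^3$; concretely, this means bounding the operator norm of the convolution operator $f \mapsto D * f$ on $L^2(N)$. By the Plancherel decomposition for the Heisenberg group, $\mf M(N) \cong \int^\oplus B(L^2(\R))\,|\lambda|\,d\lambda$, and the operator norm of $D$ equals the essential supremum over $\lambda\ne 0$ of $\|\pi_\lambda(D)\|$, where $\pi_\lambda$ is the Schr\"odinger representation of $N$ with central character $e^{i\lambda z}$. So the task reduces to showing $\|\pi_\lambda(D)\|_{B(L^2(\R))}\le 2\pi^3$ uniformly in $\lambda\ne 0$.

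To compute $\pi_\lambda(D)$ I would use $\pi_\lambda(x,y,z)f(s) = e^{i\lambda(z+y(s+x/2))}f(s+x)$ and integrate iteratively. The innermost $z$-integral, taken as a principal value as in Lemma~\ref{l:LemmaB}, equals
$$
\operatorname{PV}\int\frac{e^{i\lambda z}}{(1+x^2/4)y^2-z^2}\,dz = \frac{\pi\operatorname{sgn}(\lambda)\sin(\lambda y\sqrt{1+x^2/4})}{y\sqrt{1+x^2/4}},
$$
and the subsequent $y$-integral, via $\int y^{-1}\sin(Ay)e^{iBy}\,dy = \pi\operatorname{sgn}(A)\chi_{\{|B|<|A|\}}$, produces a characteristic function. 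After the substitution $u=s+x$, which simplifies $\{|s+x/2|<\sqrt{1+x^2/4}\}$ to $\{su<1\}$, I arrive, independently of $\lambda\ne 0$, at
$$
\pi_\lambda(D)f(s) = \pi^2\int_{\R}\frac{\chi_{\{su<1\}}}{\sqrt{1+(u-s)^2/4}}\,f(u)\,du.
$$
Calling the integral operator on the right-hand side $K$, the entire bound reduces to $\|K\|_{B(L^2(\R))}\le 2\pi$.

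This norm estimate is the main obstacle. The symmetric positive kernel $k(s,u)=\chi_{\{su<1\}}(1+(u-s)^2/4)^{-1/2}$ is pointwise in $[0,1]$, but is neither Hilbert--Schmidt nor absolutely integrable along rows, so a naive Schur test with trivial weights diverges; the cutoff $\chi_{\{su<1\}}$ must provide the required cancellation. My intended approach is a weighted Schur test with $w(s)=(1+s^2)^{-a}$ for an appropriate $a\in(-\tfrac12,0)$: when $|s|\gg 1$ the cutoff restricts $u$ to an interval of length $O(1/|s|)$ on the same-sign side, while for $|s|\lesssim 1$ the cutoff is essentially inactive and the decay of $(1+(u-s)^2/4)^{-1/2}$ alone governs the integral, so $a$ should be chosen to match these two regimes. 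If this fails to give the sharp constant, a fallback is to use the $s\leftrightarrow -s$ parity invariance to split $K$ on even and odd subspaces and invoke a hyperbolic substitution such as $s=2\sinh\sigma$. The constant $2\pi$ is exactly $\|(1+u^2/4)^{-1/2}\|_{L^2(\R)}^2$, and it is attained by Cauchy--Schwarz in the evaluation $(Kf)(0)=\int f(u)(1+u^2/4)^{-1/2}\,du$; this pinpoints $s=0$ as the extremal direction, consistent with the cutoff being inactive there.
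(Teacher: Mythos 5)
Your reduction to the dual side is the right move and your computation of $\pi_\lambda(D)$ checks out: the principal-value $z$-integral, the $y$-integral producing the cutoff, and the substitution turning $\{|s+x/2|<\sqrt{1+x^2/4}\}$ into $\{su<1\}$ are all correct, and it is a genuine (and rather clean) alternative to the paper's route, which instead permutes $dy$ and $dx$, invokes a Bessel-function identity (Lemma~\ref{l:LemmaC}), and lands on the $\lambda$-dependent kernel $\frac{2\pi^2}{|s-t|}J_0(a\sqrt{-4st})\chi_{\{st<0\}}$. But the proof is not complete: everything hinges on the bound $\|K\|_{B(L^2(\R))}\le 2\pi$ for the kernel $\chi_{\{su<1\}}(1+(u-s)^2/4)^{-1/2}$, and you explicitly leave this as an ``intended approach.'' This is exactly the step where the whole difficulty of the lemma lives. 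The paper's corresponding step is not an estimate at all but an operator identity: after the pointwise bound $|J_0|\le 1$, the dominating kernel $\frac{1}{|s-t|}\chi_{\{st<0\}}$ is recognized as the kernel of $\frac{\pi}{2}(UH-HU)$ with $H$ the Hilbert transform and $U$ multiplication by $\mathrm{sign}$, which gives norm $\le\pi$ for free. Your Fourier-conjugated kernel has no singularity on the diagonal but instead fails to decay (at $s=0$ the cutoff is inactive and the row function $(1+u^2/4)^{-1/2}$ is not integrable), and you offer no analogous identity for it.

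Two concrete problems with the sketched fallback. First, the weighted Schur test as stated is misconfigured: with $w(s)=(1+s^2)^{-a}$ and $a\in(-\tfrac12,0)$ the weight grows, and the Schur integral $\int k(0,u)\,w(u)\,du=\int (1+u^2)^{|a|}(1+u^2/4)^{-1/2}\,du$ diverges, so the test fails already at $s=0$; you would need a decaying weight (something like $a=\tfrac14$) to balance the two regimes you describe. Second, even a correctly tuned Schur test will not obviously produce the sharp constant $2\pi$ --- Schur tests are generically lossy --- and your Cauchy--Schwarz remark at $s=0$ only suggests that $2\pi$ is a lower bound for any such argument, not that it is attained as an upper bound. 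For the application (Lemma~\ref{l:LemmaE} and the final contradiction) any finite uniform constant would in fact suffice, so you do not strictly need $2\pi$; but as written you have not established boundedness at all, so the proposal has a genuine gap precisely at the lemma's central claim.
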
  
  
\begin{proof}[Proof (sketch).]
  Note that one cannot permute the order of integrations $dz$, $dy$ (cf. Lemma \ref{l:LemmaB}).  However, it is not hard to check that $dy$ and $dx$ can be permuted. We have to prove that $D$ corresponds to an operator $T\in \mf M(N)$, the von Neumann algebra associated to the left regular representation of $N$, such that $\|T\|\le 2\pi^3$ (cf. \cite{MR0228628}).
  The Heisenberg group $N$ is of type $I$, and the infinite dimensional irreducible representations of $N$ are given by $(\rho_a)_{a\in\R\backslash\{0\}}$ acting on $L^2(\R)$ by:
  \begin{eqnarray*}
    &&(\rho_a(x,0,0)f)(t)= e^{iaxt}f(t)\\
    &&(\rho_a(0,y,0)f)(t)= f(t-y)\\
    &&(\rho_a(0,0,z)f)(t)= e^{iaz}f(t)
  \end{eqnarray*}
  where $f\in L^2(\R)$.
  The remaining irreducible representations are all one-dimensional, and they form a null set for the Plancherel measure on $\hat N$.
  For $f,g\in C_c^\infty(\R)$ the integral
    $$\int_{-\infty}^{\infty}\int_{-\infty}^{\infty}\int_{-\infty}^{\infty}\frac{\langle\rho_a(x,y,x)f),g\rangle }{(1+x^2/4)y^2-z^2}\,dz\,dy\,dx
  $$
  can be computed by permuting the order of integration $dy$, $dx$ and applying Lemma \ref{l:LemmaC}.
  After some reduction, one finds that the integral is equal to 
  $$\iint_{\R^2} \overline{g(s)}k(s,t)f(t) \,dt,
  $$
  where 
  $$k(s,t)=\left\{
              \begin{array}{cc}
                \frac{2\pi^2}{|s-t|}J_0(a\sqrt{-4st}) & st<0, \\
                0 & s t>0.
              \end{array}
            \right.
  $$

  Since $|J_0(x)|\le 1$ for all $x\in \R$,
	\begin{align}\label{e:starPage7}
	|k(s,t)|\leq 2\pi^2 K(s,t)
	\end{align}
	where
	\begin{align*}
	K(s,t)=\left\{
              \begin{array}{cc}
                \frac{1}{|s-t|} & st<0, \\
                0 & s t>0.
              \end{array}
            \right.
  \end{align*}

  Moreover, $K$ is the kernel of a bounded operator on $L^2(\R)$ of norm $\le \pi$ because it is the kernel of the operator 
  $$\frac{\pi}2 (UH-HU),$$
  where
  $$H(f)(s)=\frac{1}{\pi}\int_{-\infty}^\infty \frac{f(t)}{s-t}\,dt
  $$
  is the Hilbert transform (unitary on $L^2(\R)$),
  and $U$ is the unitary operator given by multiplication with $\mathrm{sign}(s)$, $s\in\R$.
  Therefore (\ref{e:starPage7}) implies that $k(s,t)$ is the kernel of an integral operator on $L^2(\R)$ with norm $\le 2\pi^3$. Hence $D$ corresponds to an operator in $\mf M(N)=\int_{\R\backslash\{0\}}^{\oplus} \rho_a(N)''\,da$ of norm $\le 2\pi^3$.
\end{proof}
  
\begin{lemA}\label{l:LemmaE}
  If $\phi\in C_c^\infty(N)$ and $\phi\circ \gamma=\phi$ then
  $$\left|\int_{-\infty}^{\infty} \frac{\phi(x,0,0)}{\sqrt{1+ x^2/4}}\,dx\right|\le 4\pi \|\phi\|_{A(N)},
  $$
  where $\gamma$ is the diffeomorphism of $N$ defined in Lemma \ref{l:LemmaA}.
\end{lemA}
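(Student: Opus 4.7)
The plan is to extract the one-dimensional integral $\int_{\R} \phi(x,0,0)/\sqrt{1+x^2/4}\,dx$ from the distribution $D$ of Lemma~\ref{l:LemmaD} by way of the ``failure of Fubini'' identity in Lemma~\ref{l:LemmaB}, and then to exploit the invariance $\phi\circ\gamma=\phi$ to rewrite the resulting quantity in terms of $D(\phi)$ itself. The desired inequality will then follow by feeding in the bound of Lemma~\ref{l:LemmaD}.

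Write $c=c(x)=\sqrt{1+x^2/4}$ and introduce the companion expression
$$D'(\phi) \;=\; \iiint_{\R^3} \frac{\phi(x,y,z)}{c^2y^2 - z^2}\,dy\,dz\,dx,$$
where now the innermost integral in $y$ is taken as a principal value around $y=\pm z/c$. For each fixed $x$, the substitution $y\mapsto y/c$ reduces the inner double integrals appearing in $D(\phi)$ and $D'(\phi)$ to $c^{-1}I(\psi_x)$ and $c^{-1}J(\psi_x)$ respectively, where $\psi_x(y',z)=\phi(x,y'/c,z)$ and $I,J$ are as in Lemma~\ref{l:LemmaB}. Integrating the resulting pointwise identity in $x$ gives
$$D(\phi) - D'(\phi) \;=\; \pi^2 \int_{-\infty}^{\infty} \frac{\phi(x,0,0)}{\sqrt{1+x^2/4}}\,dx.$$

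The next step is to show that $D'(\phi) = -D(\phi)$, using $\phi\circ\gamma=\phi$. For each fixed $x$, I would perform the area-preserving substitution $(y,z) = (\tilde z/c,\,-c\tilde y)$ in the $(y,z)$-plane; its Jacobian is $1$, it sends $c^2 y^2 - z^2$ to $-(c^2\tilde y^2 - \tilde z^2)$, and, crucially, it transports the innermost principal value in $dy$ (around $y=\pm z/c$) to a principal value in $d\tilde z$ (around $\tilde z = \pm c\tilde y$), while the outer $dz$ becomes $d\tilde y$. Applying $\gamma$ to the point $(x,\tilde z/c,-c\tilde y)$ yields $(-x,\tilde y,\tilde z)$, so the invariance gives $\phi(x,\tilde z/c,-c\tilde y) = \phi(-x,\tilde y,\tilde z)$. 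A final change $x\mapsto -x$ (which leaves $c$ invariant) converts the transformed expression into $-D(\phi)$.

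Adding the two identities gives $2D(\phi) = \pi^2 \int \phi(x,0,0)/\sqrt{1+x^2/4}\,dx$, whence Lemma~\ref{l:LemmaD} yields $\bigl|\int \phi(x,0,0)/\sqrt{1+x^2/4}\,dx\bigr| \le (2/\pi^2)\cdot 2\pi^3 \|\phi\|_{A(N)} = 4\pi\|\phi\|_{A(N)}$. The main obstacle I anticipate is the careful bookkeeping in the third step: one must verify that the nonlinear substitution transports the principal-value prescription precisely as claimed, so that after the sign flip coming from the denominator the result is exactly $-D(\phi)$ rather than some other variant. The a priori well-definedness of $D'(\phi)$ on $C_c^\infty(N)$ is a minor parallel point, and is in any case confirmed a posteriori by the finite identity $D(\phi)-D'(\phi)=\pi^2\int\phi(x,0,0)/c\,dx$.
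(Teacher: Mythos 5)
Your proposal is correct and takes essentially the same route as the paper: the paper computes $D(\phi\circ\gamma)$ via exactly your change of variables (sending $(x,y,z)$ to $(-x,-z/c,cy)$, which flips the sign of the denominator and swaps the $dz\,dy$ order, i.e.\ produces $-D'(\phi)$ in your notation), then uses $\phi\circ\gamma=\phi$ to get $2D(\phi)=D(\phi)-D'(\phi)=\pi^2\int\phi(x,0,0)\,(1+x^2/4)^{-1/2}\,dx$ by Lemma~\ref{l:LemmaB} after rescaling $y$ by $c$, and concludes with Lemma~\ref{l:LemmaD}. Your explicit tracking of the principal-value prescription under the substitution is a point the paper leaves implicit, but the argument is the same.
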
  
\begin{proof}
  Clearly,
  \begin{eqnarray*}
   D(\phi\circ \gamma)&=&\int_{-\infty}^\infty\int_{-\infty}^\infty\int_{-\infty}^\infty \frac{\phi\left(-x,-\frac{z}{\sqrt{1+x^2/4}},y\sqrt{1+x^2/4}\right)}{(1+x^2/4)y^2-z^2}\,dz\,dy\,dx\\
   &=&\int_{-\infty}^\infty\int_{-\infty}^\infty\int_{-\infty}^\infty \frac{\phi(x,y,z)}{z^2-(1+x^2/4)y^2}\,dy\,dz\,dx
  \end{eqnarray*}
  Hence, using Lemma \ref{l:LemmaB}
  \begin{eqnarray*}
  2D(\phi)&=&D(\phi)+D(\phi\circ\gamma)\\
  &=&\int_{-\infty}^\infty\int_{-\infty}^\infty\int_{-\infty}^\infty \frac{\phi(x,y,z)}{(1+x^2/4)y^2-z^2}\,dz\,dy\,dx 
	\\&&-\int_{-\infty}^\infty\int_{-\infty}^\infty\int_{-\infty}^\infty \frac{\phi(x,y,z)}{(1+x^2/4)y^2-z^2}\,dy\,dz\,dx\\
  &=&\pi^2\int_{-\infty}^{\infty} \frac{\phi(x,0,0)}{\sqrt{1+x^2/4}}\,dx.
  \end{eqnarray*}
  The lemma follows now from Lemma \ref{l:LemmaD}.
\end{proof}
  
\begin{proof}[Proof of Theorem~\ref{thm:1} for $\SL(3,\R)$.]
It is sufficient to show that the subgroup $G_0$ does not have a multiplier bounded approximate unit for $A(G_0)$:

Assume that there exists $\tilde\phi_n\in A(G_0)$, $n\in\N$, such that $\sup_n\|\tilde\phi_n\|_{MA(G_0)} <\infty$ and
$$
\lim_{n\to\infty} \|\phi\tilde\phi_n - \phi\|_{A(G_0)} = 0 \quad\text{for all } \phi\in A(G_0).
$$
Since $C_c^\infty(G_0)$ is dense in $A(G_0)$, we can choose the $\tilde\phi_n$-functions in $C_c^\infty(G_0)$, and by averaging by $K_0$-elements from left and right, we can also obtain that that $\tilde\phi_n$ is $K_0$-biinvariant.  Note that $\tilde \phi_n\to 1$ uniformly on compact subsets of $G_0$. Put 
$$\phi_n=\tilde \phi_n|_{N}.$$
  Then
  $$\|\phi_n\|_{A(N)}=\|\phi_n\|_{MA(N)} \le \|\tilde \phi_n\|_{MA(G_0)}.$$
The first equality holds because $N$ is amenable. Thus $\sup_n \|\phi_n\|_{A(N)} <\infty$, and by Lemma \ref{l:LemmaA},  $\phi_n =\phi_n\circ\gamma$ for all $n\in\N$. By using Lemma \ref{l:LemmaE} to $\phi_n^2$ we get 
$$
\left|\int_{-\infty}^\infty \frac{\phi_n(x,0,0)^2}{\sqrt{1+x^2/4}}\,dx\right| \le 4\pi \|\phi_n\|_{A(N)}^2.
$$
Thus by Fatou's Lemma,

$$
\liminf_{n\to\infty} \|\phi_n\|_{A(N)}^2 \ge \frac{1}{4\pi} \int_{-\infty}^{\infty} \frac{dx}{\sqrt{1+x^2/4}}=\infty.
$$
This gives a contradiction.
\end{proof}

\subsection*{Case \texorpdfstring{$\Sp(2,\R)$}{Sp(2,R)}}
The group $\Sp(2,\R)$ is the set of $g\in \GL(4,\R)$ that leaves invariant the exterior form
$$x_1x_3-x_3x_1+x_2x_4-x_4x_2,$$
(cf. \cite[p.~445]{MR514561}). For our purpose, it is convenient to permute the third and fourth coordinate in $\R^4$,
so we will instead consider the group $G\cong\Sp(2,\R)$ of invertible $4\times 4$-matrices that leave invariant the form:
$$x_1x_4-x_4x_1+x_2x_3-x_3x_2.$$
$G$ is a 10 dimensional connected Lie group with Lie algebra
\begin{align*}
\mf g=\left\{(a_{ij})_{i,j=1,\ldots,4} \middle\vert
\begin{array}{l}
a_{11}+a_{44}=a_{22}+a_{33}=a_{12}+a_{34}=a_{21}+a_{43}=0, \\
a_{13}=a_{24},\ a_{31}=a_{42}
\end{array}\right\}.
\end{align*}
Let $\mf g_0\subseteq \mf g$ be the Lie algebra
$$\mf g_0=\left\{
\left(
  \begin{array}{cccc}
    0 & -c_2 & c_1 & c_3 \\
    0 & -b_2 & b_1 & c_1 \\
    0 & b_3 & b_2 & c_2 \\
    0 & 0 & 0 & 0 \\
  \end{array}
\right)
\middle\vert\ b_1,b_2,b_3,c_1,c_2,c_3\in \R
\right\}.
$$
Then $\mf g_0=\mf g_1\oplus_s \mf g_2$ (semidirect sum), where $\mf g_1\cong\mf{sl}(2,\R)$  is the Lie algebra corresponding to $c_1=c_2=c_3=0$, and $\mf g_2$ is the Lie algebra corresponding to $b_1=b_2=b_3 = 0$. (Note that $\mf g_2$ is isomorphic to the Lie algebra of the three dimensional Heisenberg group). Hence $\exp(\mf g_0)$ generates a closed subgroup $G_0$ of $G$, namely, the semidirect product of
$$
G_1=\left\{
\left(
  \begin{array}{c@{}c@{}c}
    1 & \begin{array}{cc}
          0 & 0 
        \end{array}
     & 0 \\
   \begin{array}{c}
          0 \\
          0
        \end{array} & a &  \begin{array}{c}
          0 \\
          0
        \end{array}\\
    0 & \begin{array}{cc}
          0 & 0
        \end{array} & 1 \\
  \end{array}
\right)
\middle\vert\
a\in \SL(2,\R)
\right\},
$$
and 
$$
G_2=\exp(\mf g_2)=\left\{
\left(
  \begin{array}{cccc}
    1 & -c_2 & c_1 & c_3 \\
    0 & 1 & 0 & c_1 \\
    0 & 0 & 1 & c_2 \\
    0 & 0 & 0 & 1 \\
  \end{array}
\right)
\middle\vert\ c_1,c_2,c_3\in\R
\right\},
$$
  where the action $\alpha\colon G_1\to\mr{Aut}(G_2)$ is given by 
 $$
 \alpha(a)\left(
            \begin{array}{c}
              c_1 \\
              c_2 \\
              c_3 \\
            \end{array}
          \right)=
          \left(
  \begin{array}{c@{}c}
    a & \begin{array}{c}
          0 \\
          0
        \end{array}
     \\
    \begin{array}{cc}
       0 & 0
     \end{array}
     & 1 \\
  \end{array}
\right)
\left(
            \begin{array}{c}
              c_1 \\
              c_2 \\
              c_3 \\
            \end{array}
          \right).
 $$
Consider next the compact subgroup $K_0$ of $G_0$ given by
$$
K_0=\left(
  \begin{array}{c@{}c@{}c}
    1 & \begin{array}{cc}
          0 & 0
        \end{array}
     & 0 \\
   \begin{array}{c}
          0 \\
          0
        \end{array} & \SO(2) &  \begin{array}{c}
          0 \\
          0
        \end{array}\\
    0 & \begin{array}{cc}
          0 & 0
        \end{array} & 1 \\
  \end{array}
\right)
$$ 
and the nilpotent subgroup of $G_0$ given by
\begin{eqnarray*}
  N&=&\exp\left\{
\left(
  \begin{array}{cccc}
    0 & -c_2 & c_1 & c_3 \\
    0 & 0 & b_1 & c_1 \\
    0 & 0 & 0 & c_2 \\
    0 & 0 & 0 & 0 \\
  \end{array}
\right)
\middle\vert\ b_1,c_1,c_2,c_3\in\R
\right\}\\
&=&\left\{
\left(
  \begin{array}{cccc}
    1 & -c_2 & c_1-\frac12b_1c_2 & c_3-\frac16b_1c_2^2 \\
    0 & 1 & b_1 & c_1+\frac12b_1c_2 \\
    0 & 0 & 1 & c_2 \\
    0 & 0 & 0 & 1 \\
  \end{array}
\right)
\middle\vert\ b_1,c_1,c_2,c_3\in\R
\right\}.
\end{eqnarray*}
The group $N$ is isomorphic to the group $\Gamma_4$ considered by Dixmier in \cite{MR0095427}.

In the rest of this section, we will use the following coordinates for $N$:
$$
n(x,y,z,w)=\left(
  \begin{array}{cccc}
    1 & -y & z-\frac12xy & w \\
    0 & 1 & x & z+\frac12xy \\
    0 & 0 & 1 & y \\
    0 & 0 & 0 & 1 \\
  \end{array}
\right), \quad (x,y,z,w)\in\R^4.
$$

\begin{lemA}\label{l:LemmaF}
  Let $\gamma'$ be the diffeomorphism of $N$ given by 
  $$\gamma'(x,y,z,w)=\left(-x,-\frac{z}{\sqrt{1+x^2/4}},y\sqrt{1+x^2/4},w\right).$$
If $\phi\colon N\to\C$ is the restriction of a $K_0$-biinvariant function $\tilde \phi$ on $G_0$ to $N$, then $\phi=\phi\circ\gamma'$.  
\end{lemA}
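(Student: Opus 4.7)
The plan is to imitate the proof of Lemma \ref{l:LemmaA} verbatim, using the same rotations $u,v\in\SO(2)$ as there but embedded as block-diagonal elements of $K_0$. Specifically, I would take
\[ u=\frac{1}{\sqrt{1+x^2/4}}\begin{pmatrix} x/2 & 1 \\ -1 & x/2 \end{pmatrix},\qquad v=-u, \]
and set $U=\diag(1,u,1)$, $V=\diag(1,v,1)$, both lying in $K_0$. The goal is to verify by direct matrix multiplication that
\[ U\,n(x,y,z,w)\,V = n(\gamma'(x,y,z,w)). \]
Once this identity is established, the lemma is immediate from the $K_0$-biinvariance of $\tilde\phi$.

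The key structural observation is that $U$ and $V$ act as the identity on row/column $1$ and row/column $4$: left-multiplication by $U$ leaves rows $1$ and $4$ of $n(x,y,z,w)$ untouched, and right-multiplication by $V$ leaves columns $1$ and $4$ untouched. In particular, the entry $n_{14}=w$ sits at the intersection of a fixed row and a fixed column, so $w'=w$; this explains the fourth coordinate of $\gamma'$. For the remaining coordinates, the $3\times 3$ submatrix of $n(x,y,z,w)$ on rows and columns $\{1,2,3\}$ is exactly the Heisenberg matrix used in Lemma \ref{l:LemmaA}, and the restrictions of $U,V$ to this submatrix are precisely the block matrices $\diag(u,1)$ and $\diag(v,1)$ from Lemma \ref{l:LemmaA}. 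Hence that lemma directly supplies the transformed entries $x'=-x$, $y'=-z/\sqrt{1+x^2/4}$, $z'=y\sqrt{1+x^2/4}$.

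What is not literally reduced to Lemma \ref{l:LemmaA} is the transformation of column $4$, whose entries $(n_{24},n_{34})=(z+\tfrac12 xy,\,y)$ are left-multiplied by the $2\times 2$ block $u$ and unaffected by $V$. I would carry out this routine algebraic check using $1+x^2/4=\bigl(\sqrt{1+x^2/4}\bigr)^2$ to confirm that the new $(2,4)$ and $(3,4)$ entries equal $z'+\tfrac12 x'y'$ and $y'$ for the values of $x',y',z'$ determined above, and that the new row $1$ entries in columns $2,3$ give $-y'$ and $z'-\tfrac12 x'y'$ as required. The main obstacle is purely bookkeeping of a $4\times 4$ product; the underlying algebraic identity is the same one that drove Lemma \ref{l:LemmaA}, simply applied to an extra column and row rather than confined to the $3\times 3$ Heisenberg block.
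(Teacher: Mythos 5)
Your proposal is correct and is essentially the paper's proof: the paper simply records the identity $\diag(1,u,1)\,n(x,y,z,w)\,\diag(1,v,1)=n(\gamma'(x,y,z,w))$ for the same $u,v\in\SO(2)$ as in Lemma \ref{l:LemmaA} and concludes from $K_0$-biinvariance, and the direct $4\times 4$ computation you commit to does verify this identity, including $w'=w$. One inaccuracy in your intermediate reasoning: the claimed literal reduction to Lemma \ref{l:LemmaA} does not work as stated, since the $3\times 3$ submatrix of $n(x,y,z,w)$ on rows and columns $\{1,2,3\}$ is the Heisenberg matrix in the coordinates $(-y,x,z)$ rather than $(x,y,z)$, and the restrictions of $U,V$ to that block are $\diag(1,u),\diag(1,v)$ rather than $\diag(u,1),\diag(v,1)$, so Lemma \ref{l:LemmaA} does not ``directly supply'' the transformed entries --- but this does not affect the validity of the proof, because the full matrix multiplication you propose to carry out does produce $n(\gamma'(x,y,z,w))$.
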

\begin{proof}
  Let $u$, $v$ $\in \SO(2)$ be as in Lemma \ref{l:LemmaA}. Then
  $$
  \left(
  \begin{array}{c@{}c@{}c}
    1 & \begin{array}{cc}
          0 & 0
        \end{array}
     & 0 \\
   \begin{array}{c}
          0 \\
          0
        \end{array} & u &  \begin{array}{c}
          0 \\
          0
        \end{array}\\
    0 & \begin{array}{cc}
          0 & 0
        \end{array} & 1 \\
  \end{array}
\right)
n(x,y,z,w)
\left(
  \begin{array}{c@{}c@{}c}
    1 & \begin{array}{cc}
          0 & 0
        \end{array}
     & 0 \\
   \begin{array}{c}
          0 \\
          0
        \end{array} & v &  \begin{array}{c}
          0 \\
          0
        \end{array}\\
    0 & \begin{array}{cc}
          0 & 0
        \end{array} & 1 \\
  \end{array}
\right)
=
n(\gamma'(x,y,z,w)).
  $$
\end{proof}
 
\begin{lemA}\label{l:LemmaG}
  Let $D'$ be the distribution on $N$ given by 
  $$D'(\phi)= \int_{-\infty}^{\infty}\int_{-\infty}^{\infty}\int_{-\infty}^{\infty}\frac{\phi(x,y,z,0)}{(1+x^2/4)y^2-z^2}\,dz\,dy\,dx,\qquad \phi\in C_c^\infty(N).
  $$
Then
$$
|D'(\phi)|\le 2\pi^3\|\phi\|_{A(N)}\qquad \text{for all } \phi\in C_c^\infty(N).
$$
\end{lemA}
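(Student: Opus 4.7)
The plan is to mirror the blueprint of Lemma D: realise $D'$ as an element of the group von Neumann algebra $\mf M(N)$ of norm at most $2\pi^3$, and conclude by the duality $\mf M(N)=A(N)^*$. Since $N$ is a simply connected nilpotent Lie group and therefore of type I, the task splits over the Plancherel disintegration into bounding each $\pi(D')$ uniformly.

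First I would read off the generic infinite-dimensional irreducible representations of $N$. A short computation shows the Lie brackets are $[X,Y]=Z$, $[Y,Z]=-2W$ and zero otherwise, so the centre is one-dimensional (spanned by $W$). Kirillov's orbit method, with polarisation $\mr{span}(X,Z,W)$, then produces a family $\pi_{a,c}$, $(a,c)\in(\R\setminus\{0\})\times\R$, realised on $L^2(\R)$ by
\begin{align*}
(\pi_{a,c}(n(x,0,0,0))f)(t)&=e^{ix(at^2+ct)}f(t),\\
(\pi_{a,c}(n(0,y,0,0))f)(t)&=f(t-y),\\
(\pi_{a,c}(n(0,0,z,0))f)(t)&=e^{iz(2at+c)}f(t),\\
(\pi_{a,c}(n(0,0,0,w))f)(t)&=e^{iaw}f(t).
\end{align*}
The remaining irreducible representations form a Plancherel null set, so it suffices to prove $\|\pi_{a,c}(D')\|\le 2\pi^3$ for every $(a,c)$.

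Next I would compute $\pi_{a,c}(D')$ as an integral operator on $L^2(\R)$, paralleling the corresponding step in Lemma~\ref{l:LemmaD}. Using the factorisation
$$n(x,y,z,0)=n(x,0,0,0)\,n(0,y,0,0)\,n(0,0,z-xy/2,0)\,n(0,0,0,yz-xy^2/2),$$
substituting $s=t-y$ and then $x=2y'$, $z=(t-s)z'$ in the inner integral, Lemma~\ref{l:LemmaC} converts the task into evaluating $\hat K$ at $(a(s^2+t^2)+c(s+t),\,(t-s)(a(s+t)+c))$. The algebraic identity
$$(t-s)^2[a(s+t)+c]^2-[a(s^2+t^2)+c(s+t)]^2=-4st(as+c)(at+c)$$
then yields the kernel
$$k_{a,c}(s,t)=\frac{2\pi^2}{|s-t|}\,J_0\!\bigl(2\sqrt{-st(as+c)(at+c)}\bigr)\,\chi_{\{st(as+c)(at+c)<0\}}.$$

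The final step, which I expect to be the main obstacle, is to bound in operator norm by $\pi$ the integral operator $T_{K'}$ with positive kernel $K'(s,t)=\chi_{\{st(as+c)(at+c)<0\}}/|s-t|$ (which dominates $|k_{a,c}|/(2\pi^2)$). Setting $\alpha=-c/a$, the support condition becomes $\mr{sign}(s(s-\alpha))\ne\mr{sign}(t(t-\alpha))$. My plan is a M\"obius change of variable $s\mapsto s'=s/(s-\alpha)$, combined with the $L^2$-isometric reweighting $\tilde f(s')=(\sqrt{|\alpha|}/|s'-1|)\,f(\alpha s'/(s'-1))$: one checks the identities $\mr{sign}(s(s-\alpha))=\mr{sign}(s')$ and $|s-t|=|\alpha||s'-t'|/|(s'-1)(t'-1)|$, so that the weighting Jacobian $|\alpha|/(|s'-1||t'-1|)$ cancels exactly against the transformed $|s-t|$, and the conjugated kernel becomes precisely $\chi_{\{s't'<0\}}/|s'-t'|$, the Lemma~\ref{l:LemmaD} kernel of norm at most $\pi$. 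This gives $\|\pi_{a,c}(D')\|\le 2\pi^3$ uniformly in $(a,c)$; the Plancherel disintegration of $\mf M(N)$ then delivers the stated bound on $D'$. The only delicate points are the phase arithmetic producing the algebraic identity above and the verification of the M\"obius cancellation; everything else is a close adaptation of the $\SL(3,\R)$ argument.
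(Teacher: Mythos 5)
Your overall strategy is the paper's: disintegrate $\mf M(N)$ over the generic dual, compute the kernel of the operator in each irreducible representation, and bound it via Hilbert-transform commutators. The kernel computation itself checks out: the factorisation of $n(x,y,z,0)$, the identity $(t-s)^2[a(s+t)+c]^2-[a(s^2+t^2)+c(s+t)]^2=-4st(as+c)(at+c)$, and the M\"obius conjugation $s\mapsto s/(s-\alpha)$ with the $L^2$-isometric reweighting, which correctly reduces the two-singularity kernel $\chi_{\{s(s-\alpha)t(t-\alpha)<0\}}/|s-t|$ to the Lemma~\ref{l:LemmaD} kernel of norm $\le\pi$. That last step is a valid alternative to the paper's direct use of the commutator $\frac{\pi}{2}(U_2HU_1-U_1HU_2)$ with $U_1=\mathrm{sign}(t+c)$, $U_2=\mathrm{sign}(t-c)$.

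The gap is the claim that the family $\pi_{a,c}$, $(a,c)\in(\R\setminus\{0\})\times\R$, exhausts the generic dual up to a Plancherel-null set. Writing $N=B\rtimes C$ with $B\cong\R^3$ abelian (coordinates $(x,z,w)$, dual coordinates $(s,u,v)$), the generic orbits of the dual action are the parabolas $\{(at^2+b,\,2at,\,a)\mid t\in\R\}$ with $a\neq 0$ and vertex parameter $b\in\R$ \emph{arbitrary}; the orbit invariant is $b=s-u^2/(4v)$. Your $\pi_{a,c}$ is induced from the character at the point $(0,c,a)$, whose orbit has $b=-c^2/(4a)$, hence $ab=-c^2/4\le 0$ always: you reach only the parabolas meeting the plane $\{s=0\}$. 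The points of $\hat B$ lying on the missed orbits form the open set $\{vs>u^2/4\}$, which has positive Lebesgue measure, so the missed representations carry positive Plancherel measure and the implicit disintegration $\mf M(N)=\iint^{\oplus}\pi_{a,c}(N)''\,da\,dc$ is false; you have bounded the operator only on a proper central summand of $\mf M(N)$, which does not control its norm. The repair is short but must be made: use the full two-parameter family $\rho_{a,b}$, $a\ne 0$, $b\in\R$ (induced, e.g., from the vertex $(b,0,a)$, as the paper does). The same computation then yields the kernel $\frac{2\pi^2}{|s-t|}J_0\bigl(\sqrt{-(as^2+b)(at^2+b)}\bigr)\chi_{\{(as^2+b)(at^2+b)<0\}}$, which for $ab>0$ vanishes identically since $as^2+b$ and $at^2+b$ both have the sign of $a$; so the uniform bound $2\pi^3$ does hold on the whole generic dual, but your argument as written does not establish it there.
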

 
\begin{proof}[Proof (sketch).]
  $N$ is the semidirect product of the abelian subgroup 
  $$
  B=\left\{
  \left(
    \begin{array}{cccc}
      1 & 0 & z & w \\
      0 & 1 & x & z \\
      0 & 0 & 1 & 0 \\
      0 & 0 & 0 & 1 \\
    \end{array}
  \right)
  \middle\vert\ (x,z,w)\in\R^3
  \right\},
  $$
isomorphic to $\R^3$, and the one-parameter group 
  $$
  C=\left\{
  \left(
    \begin{array}{cccc}
      1 & -y & 0 & 0 \\
      0 & 1 & 0 & 0 \\
      0 & 0 & 1 & y \\
      0 & 0 & 0 & 1 \\
    \end{array}
  \right)
  \middle\vert\ y\in\R 
  \right\},
  $$ 
i.e. $N\cong\R^3 \times_\theta \R$ where the action $\theta$ is given by
$$
\theta_y(x,z,w)=(x,z-yx,w-2yz+y^2x).
$$
Let $(s,u,v)$ be the dual coordinates to $(x,z,w)$.
The transpose of $\theta_y$ is
$$
\hat\theta_y(s,u,v)=(s-yu+y^2v,u-2yv,v).
$$  
The irreducible representation of $N$ can now be obtained by the Mackey machine (cf. \cite{MR0044536}, \cite{MR0095427}):
The orbits for $\hat\theta_y$ are
\begin{enumerate}
  \item parabolas\qquad\quad\, ($v\ne0$),
  \item straight lines\qquad ($v=0,u\ne0$),
  \item single points \qquad ($v=0,u=0$).
\end{enumerate}
Since $\{(s,u,v)\,|\, v=0\}$ is a null set in $\R^3$, the first type of orbits gives sufficiently many irreducible representations to disintegrate the regular representation of $N$.
Let $\rho_{a,b}$ be the irreducible representation coming from the parabolic orbit with vertex $(b,0,a)$, $a\in\R\backslash\{0\}$, $b\in \R$.
Then, $\rho_{a,b}$ can be realized on $L^2(\R)$ as follows:
\begin{eqnarray*}
  &&(\rho_{a,b}(x,0,0,0))(t)=e^{i(at^2+b)x} f(t)\\
  &&(\rho_{a,b}(0,y,0,0))(t)=f(t-y)\\
  &&(\rho_{a,b}(0,0,z,0))(t)=e^{i2atz} f(t)\\
  &&(\rho_{a,b}(0,0,0,w))(t)=e^{iaw} f(t),
\end{eqnarray*}
where $f\in L^2(\R)$.
A computation similar to the one in Lemma \ref{l:LemmaD} gives now,
$$
\int_{-\infty}^{\infty}\int_{-\infty}^{\infty}\int_{-\infty}^{\infty}\frac{\langle \rho_{a,b}(x,y,z,0)f,g\rangle}{(1+x^2/4)y^2-z^2}\,dz\,dy\,dx=
\iint_{\R^2} \overline{g(s)}k(s,t)f(t)\,ds\,dt,
$$
where
$$k(s,t) =\left\{
                 \begin{array}{cc}
                   \frac{2\pi^2}{|s-t|} J_0(\sqrt{-(as^2+b)(at^2+b)}) & (as^2+b)(at^2+b)<0 \\
                   0 & (as^2+b)(at^2+b)>0. \\
                 \end{array}
               \right.
$$
If $ab\ge0$ then $k(s,t)=0$ almost everywhere in $\R^2$.
If $ab<0$ we put $c=\sqrt{-b/a}$. Then
$$
|k(s,t)|\le 2\pi^2 K(s,t),
$$
where $K(s,t)=\frac{1}{|s-t|} \chi_{\{(s^2-c^2)(t^2-c^2)<0\}}.$
But $K(s,t)$ is the kernel of a bounded integral operator on $L^2(\R)$ of norm $\le \pi$, namely, the operator
$$
\frac{\pi}2(U_2 HU_1-U_1HU_2),
$$
where $H$ is the Hilbert transform, and $U_1$ and $U_2$ are the unitary multiplication operators on $L^2(\R)$ given by the functions sign$(t+c)$ and sign$(t-c)$, respectively. Hence $k(s,t)$ is the kernel of a bounded integral operator of norm $\le 2\pi^3$. Since
$$
\mf M(N)=\iint_{a\ne0, b\in\R}^{\oplus} \rho_{a,b}(N)''\,da\,db,
$$
it follows that $D'$ corresponds to an element in $\mf M(N)\cong A(N)^*$ of norm $\le2\pi^3$.
\end{proof}

\begin{proof}[Proof of Theorem \ref{thm:1} for $\Sp(2,\R)$]
Exactly as in the proof of Lemma \ref{l:LemmaE}, we get that if $\phi\in C_c^\infty(N)$ and $\phi\circ\gamma'=\phi$, then
$$
\left|\int_{-\infty}^{\infty}\frac{\phi(x,0,0,0)}{\sqrt{1+x^2/4}}\,dx\right|\le 4\pi \|\phi\|_{A(N)},
$$
and the proof for $\Sp(2,\R)$ can be completed by the same arguments as we used for $\SL(3,\R)$.
\end{proof}

\begin{rem*}
  It follows from the above proofs that 
  $$
  \R^2\times_{s} \SL(2,\R)\qquad\text{and}\qquad HG\times_{s} \SL(2,\R)
  $$
  (the semidirect product of the Heisenberg group $HG$ by $\SL(2,\R)$ described above) both
  fail to have a multiplier bounded approximate unit for their Fourier algebras, although the Fourier algebras of $\R^2$, $HG$ and $\SL(2,\R)$ all have multiplier bounded approximate units. 
  ($\R^2$ and $HG$ are amenable, for $\SL(2,\R)$ see \cite{MR784292}). 
\end{rem*}

\section{Results about Lattices in Lie groups}\label{s:Section2}
A lattice $\Gamma$ in a locally compact group $G$ is a closed discrete subgroup, for which $G/\Gamma$ has a bounded $G$-invariant measure. 
A locally compact group that admits a lattice is necessarily unimodular (cf. Definition 1.8 and Remark 1.9 in \cite{MR0507234}).

In the following, $\Gamma$ denotes a lattice in a second countable locally compact group $G$.
In this case, $\Gamma$ is countable and the quotient map $\rho\colon G\to G/\Gamma$ has a a Borel cross section.
Let $\Omega$ be the range of a Borel cross section. Then
$$
G=\bigcup_{\gamma\in\Gamma} \Omega\gamma \qquad\text{(disjoint union).}
$$
Let $\mu$ be the Haar measure on $G$.
Since $\Gamma$ is countable, $\mu(\Omega)>0$. Moreover, the quotient map $\rho$ is a bijection of $\Omega$ onto $G/\Gamma$, that carries $\mu|_{\Omega}$ onto a $G$-invariant measure on $G/\Gamma$. Thus by the assumption that $\Gamma$ is a lattice, $\mu(\Omega)<\infty$.
In the following we will assume that $\mu$ is normalized such that 
$$\mu(\Omega)=1.$$
Let $\mu_{\Gamma}$ be the counting measure on $\Gamma$.
For every bounded function $\phi$ on $\Gamma$, the function
$$
\hat\phi=\chi_{\Omega}*\phi\mu_{\Gamma}*\tilde\chi_{\Omega}
$$
is a well-defined bounded continuous function on $G$ because $\chi_{\Omega}\in L^1(G)$, $G$ is unimodular, and because $\chi_{\Omega}*\phi\mu_\Gamma$ is the bounded function given by
$$
(\chi_{\Omega}*\phi\mu_{\Gamma})(\omega\gamma)=\phi(\gamma),\qquad \gamma\in\Gamma,\,\,\omega\in\Omega.
$$
\begin{lem}\label{l:Lemma2.1}\mbox{}
  \begin{enumerate}
    \item If $\phi\in A(\Gamma)$ then $\hat\phi\in A(G)$ and
    $$ \|\hat\phi\|_{A(G)} \le \|\phi\|_{A(\Gamma)}
    $$
    \item If $\phi\in M_0 A(\Gamma)$ then $\hat\phi\in M_0A(G)$ and 
    $$\|\hat\phi\|_{M_0A(G)} \le \|\phi\|_{M_0A(\Gamma)}.
    $$
  \end{enumerate}
\end{lem}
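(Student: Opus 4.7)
My plan is to reduce both parts to standard Hilbert-space factorizations of elements in the Fourier algebra and in the algebra of completely bounded multipliers respectively, and then verify by direct convolution computations on the fundamental domain $\Omega$ that the lifting $\phi \mapsto \hat\phi$ is norm non-increasing.

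For (1), I would use the identification $A(G) = L^2(G) * \widetilde{L^2(G)}$ with $\|u\|_{A(G)} = \inf\{\|F\|_2\|H\|_2 : u = F * \tilde H\}$, and its analogue for the discrete group $\Gamma$. Given $\phi \in A(\Gamma)$ factored as $\phi = \xi * \tilde\eta$ with $\xi, \eta \in \ell^2(\Gamma)$, I would lift to
$$F := \chi_\Omega * \xi\mu_\Gamma, \qquad H := \chi_\Omega * \eta\mu_\Gamma$$
in $L^2(G)$, which take the constant value $\xi(\gamma)$, respectively $\eta(\gamma)$, on the slice $\Omega\gamma$. Since $G$ is unimodular and $\mu(\Omega) = 1$, the slices $\{\Omega\gamma\}_{\gamma \in \Gamma}$ partition $G$ into measurable pieces of measure $1$, so $\|F\|_{L^2(G)} = \|\xi\|_{\ell^2(\Gamma)}$ and similarly $\|H\|_{L^2(G)} = \|\eta\|_{\ell^2(\Gamma)}$. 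A short convolution computation using associativity and the measure-level identity $\xi\mu_\Gamma * \widetilde{\eta\mu_\Gamma} = (\xi * \tilde\eta)\mu_\Gamma = \phi\mu_\Gamma$ on $\Gamma$ then gives $\hat\phi = F * \tilde H$, and (1) follows by infimizing over factorizations of $\phi$.

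For (2), I would use the Bo\.zejko--Fendler characterization of completely bounded multipliers (see the appendix): $\psi \in M_0A(H)$ with norm at most $1$ iff there exist bounded Borel maps $P, Q \colon H \to \mc K$ into a Hilbert space with $\psi(x^{-1}y) = \langle P(y), Q(x) \rangle$ and $\|P\|_\infty \|Q\|_\infty \le 1$. Writing $xz = \omega(xz)\gamma(xz)$ for the unique decomposition with $\omega(xz) \in \Omega$, $\gamma(xz) \in \Gamma$, a direct reduction yields the intrinsic formula $\hat\phi(x) = \int_\Omega \phi(\gamma(xz))\,d\mu(z)$. Applying the decomposition twice gives $\gamma(g^{-1}hz) = \gamma(g^{-1}\omega(hz))\,\gamma(hz)$, so inserting a Bo\.zejko--Fendler factorization of $\phi$ produces
$$\hat\phi(g^{-1}h) = \int_\Omega \bigl\langle P(\gamma(hz)),\, Q(\gamma(g^{-1}\omega(hz))^{-1}) \bigr\rangle\,d\mu(z).$$
The crucial step is the change of variables $w := \omega(hz)$, which is a measure-preserving bijection of $\Omega$: through $\Omega \simeq G/\Gamma$ it corresponds to left translation by $h$ on $G/\Gamma$, and the quotient measure is $G$-invariant. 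The decomposition of $h^{-1}w$ then shows $\gamma(hz) = \gamma(h^{-1}w)^{-1}$, so the integrand decouples into an $(h,w)$-factor and a $(g,w)$-factor. Defining
$$\hat P(h)(w) := P(\gamma(h^{-1}w)^{-1}), \qquad \hat Q(g)(w) := Q(\gamma(g^{-1}w)^{-1}),$$
viewed as elements of the Hilbert space $L^2(\Omega, \mc K)$, this rewrites as $\hat\phi(g^{-1}h) = \langle \hat P(h), \hat Q(g)\rangle_{L^2(\Omega, \mc K)}$, and since $\mu(\Omega) = 1$ we have $\|\hat P\|_\infty \le \|P\|_\infty$ and $\|\hat Q\|_\infty \le \|Q\|_\infty$. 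The Bo\.zejko--Fendler characterization in the reverse direction then yields (2).

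The main technical hurdle will be the decoupling step in (2): checking the identity $\gamma(g^{-1}hz) = \gamma(g^{-1}\omega(hz))\gamma(hz)$ and, more importantly, tracking the substitution $w = \omega(hz)$ carefully enough that $\gamma(hz)$ becomes a function of $(h,w)$ alone, so that $\hat P$ and $\hat Q$ really depend only on $h$ and $g$ respectively. A secondary small point is Borel measurability of the maps $\hat P, \hat Q \colon G \to L^2(\Omega, \mc K)$, which follows from the Borel regularity of the chosen cross section.
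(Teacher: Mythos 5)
Your proposal is correct and follows essentially the same route as the paper: part (1) via the factorization $\phi=f*\tilde g$ lifted slice-wise to $L^2(G)$, and part (2) via the Gilbert/Bo\.{z}ejko--Fendler characterization combined with the identity $\hat\phi(x)=\int_\Omega\phi(\gamma(x\omega))\,d\mu(\omega)$, the cocycle relation for $\gamma(\cdot)$, and the $\tau_x$-invariance of $\mu|_\Omega$ to decouple the integrand into functions of $g$ and $h$ separately. The paper's key identity $\gamma(yx^{-1}\tau_x(\omega))=\gamma(y\omega)\gamma(x\omega)^{-1}$ is exactly your decoupling step written in the other convention, so the two arguments coincide up to relabeling.
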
  
\begin{proof}
\mbox{}

(1) If $\phi\in A(\Gamma)$, then there exists $f,g\in\ell^2(\Gamma)$ such that 
$\phi=f*\tilde g$, $\|f\|_2\|g\|_2=\|\phi\|_{A(\Gamma)}$.
Put $f_1=\chi_\Omega*f\mu_\Gamma$, $g_1=\chi_\Omega*g\mu_\Gamma$. Then $f_1,g_1\in L^2(G)$,
$\|f_1\|_2\|g_1\|_2=\|f\|_2\|g\|_2=\|\phi\|_{A(\Gamma)}$, and 
$f_1*\tilde g_1=\hat \phi$.
This proves (1).

(2) Every  $z\in G$ has a unique decomposition 
$$z=\omega\gamma,\qquad \omega\in\Omega, \gamma\in\Gamma.$$
The $\gamma$-part in the decomposition will be denoted by $\gamma(z)$. For $x\in G$, we let $\tau_x\colon\Omega\to\Omega$ be the map given by $\tau_x(\omega)=\omega'$, where $\omega'$ is the $\Omega$-component of $xw$ in the decomposition
$$x\omega = \omega' \gamma',\qquad \omega'\in \Omega, \,\,\gamma'\in\Gamma.$$
Each $\tau_x$ is a Borel isomorphism of $\Omega$ because $\tau_x$ corresponds to left translation by $x$ in $G/\Gamma$ with the  Borel isomorphism 
$\Omega\to G/\Gamma$ given by the quotient map.
Since the latter Borel isomorphism carries $\mu|_\Omega$ to an invariant measure on $G/\Gamma$, it follows that $\mu|_\Omega$ is $\tau_x$-invariant.

We rewrite the function $\hat\phi=\chi_\Omega*\phi\mu_\Gamma*\tilde \chi_\Omega$ in a suitable way.
Observe first that 
$$
(\chi_\Omega * \phi\mu_\Gamma)(\omega\gamma)=\phi(\gamma), \qquad \omega\in\Omega, \gamma\in\Gamma,
$$
or equivalently, 
$$
(\chi_\Omega * \phi\mu_\Gamma)(x)=\phi(\gamma (x)), \qquad x\in G.
$$
Therefore,
\begin{align*}
  \hat\phi(x)&= \int_G \phi(\gamma(y))\chi_\Omega(x^{-1}y)\,d\mu(y)\\
  &= \int_{x\Omega}\phi(\gamma(y))\,dy\\
  &= \int_\Omega \phi(\gamma(x\omega))\,d\mu(\omega).
\end{align*}

For $x,y\in G$ and $\omega\in\Omega$:
\begin{align*}
  x\omega &= \tau_x(\omega)\gamma(x\omega)\\
  y\omega &= \tau_y(\omega)\gamma(y\omega).
\end{align*}
Thus,
\begin{align*}
  yx^{-1}\tau_x(\omega)&=y\omega(x\omega)^{-1}\tau_x(\omega)\\
  &=\tau_y(\omega)\gamma(y\omega)\gamma(x\omega)^{-1}.
\end{align*}
Since $\tau_x(\omega)\in\Omega$ and $\gamma(y\omega)\gamma(x\omega)^{-1}\in\Gamma$, it follows that 
$$
\gamma(yx^{-1}\tau_x(\omega))=\gamma(y\omega)\gamma(x\omega)^{-1}.
$$
Hence
$$
\int_\Omega \phi(\gamma(y\omega)\gamma(x\omega)^{-1})\,d\mu(\omega)= \int_\Omega \phi(\gamma(yx^{-1}\tau_x(\omega)))\,d\mu(\omega).
$$
However, since $d\mu(\omega)$ is invariant under $\tau_x$, the latter integral is equal to 
$$
\int_\Omega \phi(\gamma(yx^{-1}\omega))\,d\mu(\omega)=\hat\phi(yx^{-1}).
$$
Hence, $\forall x,y\in G$:
\begin{align}\label{e:starpage20}
	\hat\phi(yx^{-1})=\int_\Omega \phi(\gamma(y\omega)\gamma(x\omega)^{-1})\,d\mu(\omega).
\end{align}
We can now apply Bozejko-Fendler's result that $M_0A(G)$ coincides with the space of Herz-Shur multipliers on $G$ with same norm \cite{MR753889}:
Since $\Gamma$ is discrete, it implies, that there exist a Hilbert space $H$ and bounded maps $\xi,\eta$ from $\Gamma$ to $H$ such that 
$$
\phi(\gamma_2\gamma_1^{-1})=\langle \xi(\gamma_1),\eta(\gamma_2)\rangle,\qquad \gamma_1,\gamma_2\in\Gamma,
$$
and
$$
\|\xi\|_\infty\|\eta\|_\infty=\|\phi\|_{M_0 A(\Gamma)}. 
$$
This follows from Gilbert's characterization of Herz-Schur multipliers \cite{GilbertIII}.
Define now
$\hat\xi,\hat\eta\colon\Gamma\to L^2(\Omega,H,d\mu)$ by
\begin{eqnarray*}
  &&\hat \xi(x)(\omega)=\xi(\gamma(x\omega))\\
  &&\hat \eta(x)(\omega)=\eta(\gamma(x\omega)).
\end{eqnarray*}
Then $\hat\xi$, $\hat\eta$ are bounded Borel functions from $G$ to $L^2(\Omega,H,d\mu)$ and 
$$
\sup_{x\in G} \|\hat\xi(x)\|_2\le \|\xi\|_\infty,\qquad \sup_{x\in G}\|\hat\eta(x)\|_2\le \|\eta\|_\infty,
$$
and by (\ref{e:starpage20}),
$$
\hat\phi(yx^{-1})=\int_\Omega \langle\xi(\gamma(x\omega)),\eta(\gamma(y\omega))\rangle\,d\mu(\omega)=\langle \hat \xi(x), \hat\eta(y)\rangle.
$$
Since $\hat \phi$ is continuous, it implies that $\hat\phi\in M_0A(G)$ and
$$
\|\hat\phi\|_{M_0A(G)} \le \|\hat\xi\|_\infty\|\hat\eta\|_\infty\le \|\xi\|_\infty\|\eta\|_\infty=\|\phi\|_{M_0A(\Gamma)},
$$
(cf. proof of \cite[Proposition~1.1]{MR748862} or \cite{GilbertIII}).
This proves (2).
\end{proof}

\begin{lem}\label{l:Lemma2.2}
  Let $G$ be a locally compact group and let $k\ge 1$.
  Then the following conditions are equivalent:
  \begin{enumerate}
    \item There exists a net $(\phi_\alpha)$ in $A(G)$ such that $\sup_\alpha \|\phi_\alpha\|_{M_0A(G)}\le k$ and $\phi_\alpha\to 1$ in $\sigma(L^\infty,L^1)$-topology.
    \item There exists a net $(\phi_\alpha)$ in $A(G)$ such that $\sup_\alpha \|\phi_\alpha\|_{M_0A(G)}\le k$ and $\phi_\alpha\to 1$ uniformly on compact sets.
    \item There exists an approximate unit $(\phi_\alpha)$ for $A(G)$ such that $\sup_\alpha\|\phi_\alpha\|_{M_0A(G)}\le k$. 
  \end{enumerate}
  (By an approximate unit in $A(G)$, we just mean a net $(\phi_\alpha)$ such that $$\lim_\alpha \|\phi_\alpha\psi-\psi\|_{A(G)}=0 \qquad \forall \psi\in A(G).$$
  The net $(\phi_\alpha)$ will in general be unbounded in $A(G)$-norm.)
\end{lem}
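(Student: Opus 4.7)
I would prove the implications cyclically: $(3) \Rightarrow (2) \Rightarrow (1) \Rightarrow (3)$.

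For $(3) \Rightarrow (2)$: given a compact $K \subseteq G$, pick $\psi \in A(G) \cap C_c(G)$ with $\psi \equiv 1$ on $K$ (possible since $A(G) \cap C_c(G)$ is dense in $C_0(G)$). The approximate unit property forces $\phi_\alpha\psi \to \psi$ in $A(G)$-norm, hence in sup-norm, so $\phi_\alpha \to 1$ uniformly on $K$. For $(2) \Rightarrow (1)$: the contractive inclusion $M_0A(G) \hookrightarrow C_b(G)$ supplies a uniform sup-norm bound $\|\phi_\alpha\|_\infty \leq k$, and dominated convergence converts uniform convergence on compacta into $\sigma(L^\infty,L^1)$-convergence against any $f \in L^1(G)$.

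The main implication $(1) \Rightarrow (3)$ proceeds in two stages. First, I would upgrade (1) to (2) by convolution smoothing: fix $h_1, h_2 \in L^1(G) \cap C_c(G)$ with $\int h_i\, d\mu = 1$ and supports in a small neighborhood of the identity, and set $\tilde\phi_\alpha = h_1 * \phi_\alpha * h_2$. Via the Bo\.zejko--Fendler factorization $\phi(yx^{-1}) = \langle \xi(x), \eta(y)\rangle$ (cf.\ the appendix), convolution by an $L^1$-function acts on the representing vectors by conjugate-convolution, showing that $M_0A(G)$ is a Banach $L^1$-bimodule with $\|h_1 * \phi * h_2\|_{M_0A} \leq \|h_1\|_1 \|h_2\|_1 \|\phi\|_{M_0A}$, so $\|\tilde\phi_\alpha\|_{M_0A} \leq k$. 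For each fixed $g$, the value $\tilde\phi_\alpha(g)$ is a pairing of $\phi_\alpha$ against a compactly supported $L^1$-function, so (1) yields $\tilde\phi_\alpha(g) \to 1$; moreover $\{\tilde\phi_\alpha\}$ is equicontinuous because differences of translates are controlled by $\|(h_i)_u - h_i\|_1 \cdot k$, which tends to $0$ by continuity of translation in $L^1$. Arzel\`a--Ascoli then upgrades this to uniform convergence on compacta, giving (2) for $(\tilde\phi_\alpha)$.

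In the second stage, assuming (2), I would build the approximate unit via a directed-set construction over pairs (finite subset of $A(G)$, $\varepsilon > 0$). The task reduces to showing that for each $\psi \in A(G)$ and $\varepsilon > 0$ there exists $\phi \in \mr{co}\{\phi_\alpha\}$ with $\|\phi\psi - \psi\|_{A(G)} < \varepsilon$; convexity preserves the $M_0A$-norm bound. By density one can take $\psi \in A(G) \cap C_c(G)$, giving uniform convergence $\phi_\alpha\psi \to \psi$ on a compact set together with the uniform bound $\|\phi_\alpha\psi\|_{A(G)} \leq k\|\psi\|_{A(G)}$. By Mazur's theorem the convex set $\{\phi\psi : \phi \in \mr{co}\{\phi_\alpha\}\} \subseteq A(G)$ has equal norm- and weak-closures, so it suffices to place $\psi$ in its weak closure, i.e., to verify $T(\phi_\alpha\psi) \to T(\psi)$ for every $T \in \mf M(G)$ along a suitable subnet.

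The main obstacle is this final weak-closure verification. For $T = \lambda(h)$ with $h \in L^1(G)$ it follows from (1) that $T(\phi_\alpha\psi) = \int h\psi\cdot\phi_\alpha\, d\mu \to \int h\psi\, d\mu = T(\psi)$, but $\lambda(L^1(G))$ is only ultraweakly (not norm) dense in $\mf M(G)$, so a direct $3\varepsilon$-argument fails. The resolution I have in mind is to exploit that the transposed maps $m_{\phi_\alpha}^*\colon \mf M(G) \to \mf M(G)$ are uniformly completely bounded and normal; pointwise convergence $\phi_\alpha(g) \to 1$ from Stage~1 gives $m_{\phi_\alpha}^*(\lambda(g)) = \phi_\alpha(g)\lambda(g) \to \lambda(g)$ ultraweakly on generators, and the uniform CB bound together with the weak-$*$ density of $\mr{span}\,\lambda(G)$ in $\mf M(G)$ should propagate the convergence to all of $\mf M(G)$, yielding the weak convergence $\phi_\alpha\psi \to \psi$ in $A(G)$ needed to close the argument via Mazur.
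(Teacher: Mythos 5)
Your implications $(3)\Rightarrow(2)\Rightarrow(1)$ and your Stage~1 (smoothing $\phi_\alpha$ by convolution to upgrade $\sigma(L^\infty,L^1)$-convergence to uniform convergence on compacta while preserving the $M_0A$-bound) are sound and close in spirit to the paper's $(1)\Rightarrow(2)$. The genuine gap is in Stage~2, the step $(2)\Rightarrow(3)$, and it sits exactly where you flag the ``main obstacle.'' Your proposed resolution --- that the uniform CB bound on $m_{\phi_\alpha}^*$ together with the $\sigma$-weak density of $\mathrm{span}\,\lambda(G)$ in $\mf M(G)$ ``should propagate'' the convergence $m_{\phi_\alpha}^*(T)\to T$ from generators to all $T\in\mf M(G)$ --- does not work. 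Uniform boundedness lets you propagate pointwise convergence from a \emph{norm}-dense subspace; $\mathrm{span}\,\lambda(G)$ (like $\lambda(L^1(G))$) is only $\sigma$-weakly dense. Concretely, in the three-term estimate for $\langle m_{\phi_\alpha}^*T-T,\psi\rangle$ with $T_\beta\in\mathrm{span}\,\lambda(G)$, $T_\beta\to T$ $\sigma$-weakly, the error term is $\langle T-T_\beta,\phi_\alpha\psi\rangle$, and since the set $\{\phi_\alpha\psi\}$ is merely norm-bounded in $A(G)$ (not relatively norm-compact), $\sigma$-weak convergence of $T_\beta$ gives no control uniform in $\alpha$. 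This is the same failure you correctly identified for $T=\lambda(h)$, so the final step restates the difficulty rather than resolving it; the weak convergence $\phi_\alpha\psi\to\psi$ in $\sigma(A(G),\mf M(G))$ needed to invoke Mazur is left unproved.

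The paper closes this gap by a different device (attributed in a remark to Cowling, replacing an earlier argument valid only for Lie groups), which bypasses weak convergence and Mazur entirely and produces \emph{norm} convergence directly. Take $h\in C_c(G)_+$ with $\int h\,d\mu=1$ and set $\phi_\alpha'=h*\phi_\alpha$. For $\psi\in A(G)\cap C_c(G)$ with $K=\supp(h)$, $L=\supp(\psi)$, one has $(h*\phi_\alpha)|_L=(h*g_\alpha)|_L$ where $g_\alpha=\phi_\alpha\chi_{K^{-1}L}$. Uniform convergence on the fixed compact set $K^{-1}L$ gives $\tilde g_\alpha\to\tilde\chi_{K^{-1}L}$ in $L^2(G)$, and the inequality $\|u*\tilde v\|_{A(G)}\le\|u\|_2\|v\|_2$ (with $h\in L^2(G)$ fixed) converts this into $h*g_\alpha\to h*\chi_{K^{-1}L}$ in $A(G)$-norm, whence $\phi_\alpha'\psi\to\psi$ in $A(G)$-norm; density of $A(G)\cap C_c(G)$ and the uniform bound $\|\phi_\alpha'\|_{MA(G)}\le k$ finish the proof. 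This passage from uniform convergence on a fixed compact set to $A(G)$-norm convergence via an $L^2$-factorization is the key idea missing from your outline; if you retain the convolved net from your Stage~1 rather than discarding its structure at the start of Stage~2, you can run exactly this argument.
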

\begin{proof}
  (3)$\implies$(2)$\implies$(1) is clear.\\
  (1)$\implies$(2): Assume that $(\phi_\alpha)$ satisfies (1) and put 
  $$\phi'_\alpha=h*\phi_\alpha,
  $$
  where $h\in C_c(G)_+$, $\int h \,d\mu=1$. Then
  $$
  \phi'_\alpha(x)=\int_G h(xy)\phi_\alpha(y^{-1})dy.
  $$
  Let $K\subseteq G$ be compact. Then the functions $h(x\ \cdot\ )$, $x\in K$ form a compact subset of $L^1(G)$. Since 
  $\phi_\alpha\to 1$ in $\sigma(L^\infty, L^1)$, and since $\sup_\alpha \|\phi_\alpha\|_\infty \le k$, the convergence is uniform on compact subsets of $L^1(G)$.
  Hence,
  $$
  \phi'_\alpha(x) = \langle\check\phi_\alpha, h(x\ \cdot\ )\rangle
  $$
  converges to $\langle 1, h(x\ \cdot\ )\rangle = (h*1)(x)=1$ uniformly on $K$. Moreover, $\phi'_\alpha$ is contained in the $\sigma(L^\infty,L^1)$-closed convex hull of left translates of $\phi_\alpha$, and since the unit ball in $M_0A(G)$ is $\sigma(L^\infty,L^1)$-closed (cf. \cite{MR784292}), we have
  $$
  \|\phi'_\alpha\|_{M_0A(G)}\le \|\phi_\alpha\|_{M_0A(G)}\le k.
  $$
  (2)$\implies$(3): Assume that $(\phi_\alpha)$ satisfies (2). Choose $h\in C_c(G)_+$, $\int_G h \, d\mu =1$, and put $\phi'_\alpha=h*\phi_\alpha$.
  As above, we have $\|\phi'_\alpha\|_{M_0A(G)}\le k$ for all $\alpha$.
  Let $\psi\in A(G)\cap C_c(G)$, and put 
  $$K=\supp(h),\qquad L=\supp(\psi).$$
  Moreover, set $g_\alpha=\phi_\alpha\chi_{K^{-1}L}$ (where $\chi_E$ is the characteristic function of a set $E$).
  Then for $x\in L$,
  \begin{equation}\label{e:ipage23}
    (h*\phi_\alpha)(x)=\int_K h(y)\phi_\alpha(y^{-1}x)\,dy = (h*g_\alpha)(x)
  \end{equation}
Similarly,
  \begin{equation}\label{e:iipage23}
    (h*1)(x)= (h*\chi_{K^{-1}L})(x),\qquad x\in L.
  \end{equation}
By the assumption on $\phi_\alpha$, $g_\alpha\to \chi_{K^{-1}L}$ uniformly on $K^{-1}L$. Moreover, $g_\alpha$ vanishes outside the compact set $K^{-1}L$. Hence,
$$\tilde g_a\to \tilde \chi_{K^{-1}L}, \qquad \text{in $L^2(G)$-norm,}$$
  and since $h\in L^2(G)$, this implies that 
  $$
  h*g_\alpha \to h*\chi_{K^{-1}L} \qquad \text{in $A(G)$-norm.}
  $$
Hence also
  $$
  (h*g_\alpha)\psi \to (h*\chi_{K^{-1}L})\psi \qquad \text{in $A(G)$-norm,}
  $$
which by (\ref{e:ipage23}) and (\ref{e:iipage23}) is equivalent to
  $$
  \phi'_\alpha\psi=(h*\phi_\alpha)\psi \to (h*1)\psi=\psi \qquad \text{in $A(G)$-norm.}
  $$
Since $A(G)\cap C_c(G)$ is dense in $A(G)$ and since 
$$
\sup_\alpha \|\phi'_\alpha\|_{MA(G)} \le \sup_{\alpha} \|\phi'_\alpha\|_{M_0 A(G)}<\infty,
$$
it follows that 
$$
\|\phi'_\alpha\psi-\psi\|_{A(G)}\to 0,\qquad \text{for all $\psi\in A(G)$.}
$$
\end{proof}

\begin{rem*}
  The proof of (2)$\implies$(3) in Lemma \ref{l:Lemma2.2} is due to Michael Cowling (private communication). It substitutes a previous proof of ours, that was valid only for Lie groups.
\end{rem*}

\begin{thm}\label{thm:lattice}
  Let $\Gamma$ be a lattice in a second countable locally compact group $G$, and let $k\in [1,\infty[$.
  then the following conditions are equivalent.
  \begin{enumerate}
    \item $A(G)$ has an approximate unit $(\phi_\alpha)$, such that $\|\phi_\alpha\|_{M_0A(G)} \le k$, for all $\alpha$.
    \item $A(\Gamma)$ has an approximate unit $(\psi_\alpha)$, such that $\|\psi_\alpha\|_{M_0A(\Gamma)} \le k$ for all $\alpha$.
  \end{enumerate}
\end{thm}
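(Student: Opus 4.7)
The plan is to deduce Theorem~\ref{thm:lattice} by combining Lemma~\ref{l:Lemma2.1} (the lifting $\psi\mapsto\hat\psi$ from $\Gamma$ to $G$) with Lemma~\ref{l:Lemma2.2} (equivalence of the three formulations of having a $k$-bounded approximate unit in $M_0A$-norm).

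For (2)$\Rightarrow$(1) I would apply the hat map of Lemma~\ref{l:Lemma2.1} to an approximate unit $(\psi_\alpha)\subset A(\Gamma)$, landing in $A(G)$ with $\|\hat\psi_\alpha\|_{M_0A(G)}\le k$. Testing $\psi_\alpha$ against $\delta_\gamma\in A(\Gamma)$ forces $\psi_\alpha(\gamma)\to 1$ pointwise on $\Gamma$, and $\|\psi_\alpha\|_\infty\le \|\psi_\alpha\|_{M_0A(\Gamma)}\le k$. Two applications of dominated convergence to
\[
\hat\psi_\alpha(x)=\int_\Omega \psi_\alpha(\gamma(x\omega))\,d\mu(\omega)
\]
then give first pointwise convergence $\hat\psi_\alpha(x)\to 1$ on $G$, and then, thanks to the uniform bound $|\hat\psi_\alpha|\le k$, convergence in the $\sigma(L^\infty(G),L^1(G))$-topology. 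Lemma~\ref{l:Lemma2.2}(1)$\Rightarrow$(3) applied to $G$ then produces the required approximate unit.

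For (1)$\Rightarrow$(2) I would restrict: set $\psi_\alpha=\phi_\alpha|_\Gamma$. The Bo\.zejko--Fendler characterization of $M_0A$ discussed in the Appendix gives at once $\|\psi_\alpha\|_{M_0A(\Gamma)}\le \|\phi_\alpha\|_{M_0A(G)}\le k$: writing $\phi_\alpha(yx^{-1})=\langle\xi(x),\eta(y)\rangle$ on $G$, one simply restricts the bounded maps $\xi,\eta\colon G\to H$ to $\Gamma$. That $\psi_\alpha$ actually lies in $A(\Gamma)$ is the content of Herz's restriction theorem for Fourier algebras of closed subgroups. Lemma~\ref{l:Lemma2.2}(3)$\Rightarrow$(2) applied to $G$ gives $\phi_\alpha\to 1$ uniformly on compact subsets of $G$, whence pointwise on $\Gamma$; combined with the uniform bound $\|\psi_\alpha\|_\infty\le k$, this yields $\sigma(\ell^\infty(\Gamma),\ell^1(\Gamma))$-convergence. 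A final application of Lemma~\ref{l:Lemma2.2}(1)$\Rightarrow$(3) on $\Gamma$ completes the argument.

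I expect the main delicate point to be the invocation of Herz's restriction theorem in (1)$\Rightarrow$(2), which guarantees $\phi_\alpha|_\Gamma\in A(\Gamma)$: the $M_0A$-norm inequality under restriction is immediate from the Herz--Schur form, but membership in $A(\Gamma)$ is genuinely non-elementary. If one prefers to avoid this external appeal, one can instead mimic the computation in Lemma~\ref{l:Lemma2.1}: from $\phi_\alpha=f*\tilde g$ with $f,g\in L^2(G)$, the cross-section decomposition $G=\bigsqcup_\gamma\Omega\gamma$ can be used to exhibit $F,G\in\ell^2(\Gamma)$ with $F*\tilde G=\phi_\alpha|_\Gamma$ and $\|F\|_2\|G\|_2\le\|f\|_2\|g\|_2$, paralleling the averaging construction used to define $\hat\phi$.
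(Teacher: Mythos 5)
Your overall strategy coincides with the paper's: (2)$\Rightarrow$(1) via the lift $\psi\mapsto\hat\psi$ of Lemma~\ref{l:Lemma2.1} followed by Lemma~\ref{l:Lemma2.2}, and (1)$\Rightarrow$(2) by restriction to the closed subgroup $\Gamma$ (the paper simply cites \cite{MR784292} for that direction, which packages Herz's restriction theorem together with the $M_0A$-norm estimate you extract from the Bo\.zejko--Fendler picture). The one step that does not survive scrutiny as written is your second ``application of dominated convergence'': an approximate unit is a \emph{net}, and for a net the uniform bound $|\hat\psi_\alpha|\le k$ together with pointwise convergence $\hat\psi_\alpha\to 1$ on the uncountable group $G$ does \emph{not} imply $\sigma(L^\infty(G),L^1(G))$-convergence (index by the finite subsets $F$ of $G$ ordered by inclusion and take $\chi_F$: this converges to $1$ pointwise and boundedly, yet pairs to $0$ with every $f\in L^1(G)$). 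Your first application is harmless only because, for fixed $x$, the integral $\int_\Omega\psi_\alpha(\gamma(x\omega))\,d\mu(\omega)$ is a countable convex combination $\sum_\gamma c_\gamma\psi_\alpha(\gamma)$, and bounded pointwise convergence of a net on the countable set $\Gamma$ does pass to such sums by finite truncation; no such reduction is available for a general $f\in L^1(G)$ in the second step.

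The paper closes exactly this gap by a different observation: the map $\phi\mapsto\hat\phi$ is the transpose of a bounded map $L^1(G)\to\ell^1(\Gamma)$, hence is $\sigma(\ell^\infty(\Gamma),\ell^1(\Gamma))$-to-$\sigma(L^\infty(G),L^1(G))$ continuous, and $\psi_\alpha\to 1_\Gamma$ weak$^*$ is legitimate for nets because $\Gamma$ is countable and discrete. Alternatively you could first replace the net by a sequence, using separability of $A(\Gamma)$ (available since $G$ is second countable), after which dominated convergence applies; either repair is one line, so your argument is essentially correct modulo this point. A smaller remark on your optional alternative to Herz's theorem in (1)$\Rightarrow$(2): restricting $\phi=f*\tilde g$ to $\Gamma$ does not produce scalar-valued $F,G\in\ell^2(\Gamma)$ with $F*\tilde G=\phi|_\Gamma$; the cross-section decomposition naturally identifies $\lambda_G|_\Gamma$ with $\lambda_\Gamma\otimes 1_{L^2(\Omega)}$, so one obtains a vector-valued (equivalently, a countable sum of) coefficient(s), which still lies in $A(\Gamma)$ with the right norm bound but needs that extra half-step.
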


\begin{proof}
  (1)$\implies$(2) follows from \cite{MR784292}, and is valid for any closed subgroup $\Gamma\subseteq G$.
  
  (2)$\implies$(1): Assume that $(\psi_\alpha)\subseteq A(\Gamma)$ satisfies (2), and put $\hat\psi_\alpha=\chi_\Omega*\psi_\alpha\mu_\Gamma*\tilde\chi_\Omega$, where
  $\Omega$  is chosen as in Lemma \ref{l:Lemma2.1}. Then $\hat\psi_\alpha\in A(G)$ by Lemma \ref{l:Lemma2.1}.
  Since $\Gamma$ is discrete, the net $(\psi_\alpha)$ is uniformly bounded, and since $\psi_\alpha\to 1$ pointwise in $\Gamma$, also
  $$\psi_\alpha\to 1_\Gamma,\qquad \sigma(\ell^\infty(\Gamma),\ell^1(\Gamma)).$$
  It is easy to check that the map $\phi\mapsto\hat\phi=\chi_\Omega*\phi\mu_\Gamma*\tilde\chi_\Omega$ from $\ell^\infty(\Gamma)$ to $L^\infty(G)$ is the transpose of a bounded map from $L^1(G)$ to $\ell^1(\Gamma)$. Hence,
  $\hat\psi_\alpha\to 1_G$ in $\sigma(L^\infty,L^1)$-topology. Moreover, 
  $\|\hat\psi_\alpha\|_{M_0A(G)}\le \|\psi_\alpha\|_{M_A(\Gamma)}$ by Lemma \ref{l:Lemma2.1}.
  Hence (1)$\implies$(2) follows from (1)$\iff$(3) in Lemma \ref{l:Lemma2.2}.
\end{proof}

\begin{cor}\label{c:Corollary2.4}
  Every lattice $\Gamma$ in a simple Lie group of real rank at least $2$ fails to have a complete multiplier bounded approximate unit.
\end{cor}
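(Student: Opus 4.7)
The strategy is an immediate chain of implications combining the two principal ingredients already in place: Theorem~\ref{thm:1}, which rules out an approximate unit in $A(G)$ bounded in the ordinary multiplier norm $\|\cdot\|_{MA(G)}$ when $G$ is simple of real rank $\geq 2$ (with finite center), and Theorem~\ref{thm:lattice}, which transfers a completely bounded multiplier bounded approximate unit between $A(\Gamma)$ and $A(G)$. I would argue by contradiction.

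Assume $\Gamma$ is a lattice in a simple Lie group $G$ of real rank $\geq 2$ with finite center, and that $A(\Gamma)$ admits an approximate unit $(\psi_\alpha)$ with $\sup_\alpha \|\psi_\alpha\|_{M_0A(\Gamma)} \leq k < \infty$. A simple Lie group is second countable, so Theorem~\ref{thm:lattice} applies directly (with this same constant $k$) and produces an approximate unit $(\phi_\alpha)$ in $A(G)$ with $\sup_\alpha \|\phi_\alpha\|_{M_0A(G)} \leq k$.

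Now one invokes the elementary inequality $\|\phi\|_{MA(G)} \leq \|\phi\|_{M_0A(G)}$, which is immediate from the definitions recalled in the introduction: the multiplier norm is the operator norm of $m_\phi$ on $A(G)$, while the completely bounded multiplier norm is the completely bounded operator norm of its transpose on $\mathfrak{M}(G)$, and an operator-norm bound is implied by a completely bounded bound. Consequently $(\phi_\alpha)$ is an approximate unit for $A(G)$ bounded in the ordinary multiplier norm, contradicting Theorem~\ref{thm:1}.

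There is essentially no technical obstacle: both inputs are already proved, and the corollary is purely a combination. The only real point to flag is a hypothesis check, namely that Theorem~\ref{thm:1} as proved here requires $G$ to have finite center, so strictly speaking the corollary is being asserted under that same hypothesis (the removal of the finite-center assumption is attributed in the introduction to Dorofaeff \cite{MR1245415,MR1418350}).
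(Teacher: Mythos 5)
Your proposal is correct and is exactly the paper's argument: the corollary is stated there with the one-line proof ``Follows from Theorem~\ref{thm:lattice} and Theorem~\ref{thm:1},'' which you have simply spelled out (including the inequality $\|\cdot\|_{MA(G)}\le\|\cdot\|_{M_0A(G)}$ and the finite-center caveat, both appropriate).
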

\begin{proof}
  Follows from Theorem~\ref{thm:lattice} and Theorem \ref{thm:1}.
\end{proof}

Let $\Gamma$ be a discrete group, and let $\mf M(\Gamma)$ be the von Neumann algebra associated with the left regular representation of $\Gamma$ on $\ell^2(\Gamma)$, and let 
$$
\delta_x(y)=\left\{
   \begin{array}{ll}
     1, & x = y \\
     0, & x\neq y
   \end{array}
 \right.
$$
be the standard basis in $\ell^2(\Gamma)$. Put 
$$\mathrm{tr}(a)=\langle a\delta_e,\delta_e\rangle,\qquad a\in \mf M(\Gamma).$$
Then $\mathrm{tr}$ is a normal faithful trace on $\mf M(\Gamma)$.

\begin{lem}\label{l:lemma2.5}
  Let $T$ be a bounded map from $C_r^*(\Gamma)$ to itself or a bounded map from $\mf M(\Gamma)$ to itself. Let
  $$\phi_T(x)=\mathrm{tr}(\lambda(x)^*T\lambda(x)), \qquad x\in \Gamma.
  $$
  \begin{enumerate}
    \item If $T$ is completely bounded, then $\phi_T\in M_0A(G)$, and 
    $$\|\phi_T\|_{M_0 A(\Gamma)}\le \|T\|_{CB}.$$
    \item If $T$ is of finite rank, then $\phi_T\in \ell^2(\Gamma)$.
  \end{enumerate}
\end{lem}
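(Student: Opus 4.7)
The plan is to reduce both parts to identities that follow directly from the Stinespring--Wittstock representation of completely bounded maps combined with the Bo\.zejko--Fendler--Gilbert characterization of Herz--Schur multipliers already used in Lemma~\ref{l:Lemma2.1}.

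For (1), the Wittstock--Paulsen theorem applied to $T$ (viewed as a completely bounded map into $B(\ell^2(\Gamma))$) furnishes a Hilbert space $K$, a $*$-representation $\pi$ of the underlying $C^*$-algebra on $K$, and bounded operators $V_1, V_2\colon \ell^2(\Gamma)\to K$ with $T(a)=V_1^*\pi(a)V_2$ and $\|V_1\|\|V_2\|\le\|T\|_{CB}$. The central step is a trace-invariance trick. Since $\mathrm{tr}$ is tracial on $\mf M(\Gamma)$, for every $h\in\Gamma$
$$\phi_T(g)=\mathrm{tr}(\lambda(h)\lambda(g)^*T(\lambda(g))\lambda(h)^*)=\langle T(\lambda(g))\delta_{h^{-1}},\delta_{gh^{-1}}\rangle.$$
Setting $y=h^{-1}$ and $x=gy$ (so that $g=xy^{-1}$) rewrites this as the clean matrix-coefficient identity
$$\phi_T(xy^{-1})=\langle T(\lambda(xy^{-1}))\delta_y,\delta_x\rangle\qquad\forall x,y\in\Gamma.$$
Substituting the Stinespring form and using that $\pi\circ\lambda$ is a unitary representation (so $\pi(\lambda(xy^{-1}))=\pi(\lambda(x))\pi(\lambda(y))^*$) yields
$$\phi_T(xy^{-1})=\langle \pi(\lambda(y))^*V_2\delta_y,\ \pi(\lambda(x))^*V_1\delta_x\rangle.$$
Defining $\xi(y)=\pi(\lambda(y))^*V_2\delta_y$ and $\eta(x)=\pi(\lambda(x))^*V_1\delta_x$ produces bounded maps $\xi,\eta\colon\Gamma\to K$ with $\|\xi\|_\infty\le\|V_2\|$ and $\|\eta\|_\infty\le\|V_1\|$, which by the Bo\.zejko--Fendler--Gilbert characterization gives $\phi_T\in M_0A(\Gamma)$ with $\|\phi_T\|_{M_0A(\Gamma)}\le\|V_1\|\|V_2\|\le\|T\|_{CB}$.

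For (2), if $T$ has finite rank write $T(a)=\sum_{i=1}^n \omega_i(a)b_i$ with $\omega_i$ bounded functionals on the algebra and $b_i$ in the algebra. Then
$$\phi_T(x)=\sum_{i=1}^n \omega_i(\lambda(x))\,\mathrm{tr}(\lambda(x)^*b_i).$$
The factor $x\mapsto\omega_i(\lambda(x))$ is uniformly bounded since $\|\lambda(x)\|=1$, while
$$\mathrm{tr}(\lambda(x)^*b_i)=\langle b_i\delta_e,\delta_x\rangle$$
is the coefficient sequence of the vector $b_i\delta_e\in\ell^2(\Gamma)$, hence lies in $\ell^2(\Gamma)$. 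The product of a bounded function with an $\ell^2$ function is in $\ell^2$, and summing finitely many such products keeps us in $\ell^2(\Gamma)$; so $\phi_T\in\ell^2(\Gamma)$.

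The one genuinely non-routine move is the trace identity in part~(1): realizing that although the definition of $\phi_T(g)$ singles out no pair $(x,y)$ with $xy^{-1}=g$, the matrix coefficient $\langle T(\lambda(g))\delta_y,\delta_x\rangle$ is in fact constant along each such pair and equals $\phi_T(g)$. This is exactly what converts the abstract CB-structure of $T$ into the explicit Herz--Schur factorization needed to invoke Bo\.zejko--Fendler--Gilbert. Once this identity is in hand, the rest is a direct application of Stinespring--Wittstock, and part~(2) is essentially bookkeeping.
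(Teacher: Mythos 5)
Your proof is correct, and part (1) follows a genuinely different route from the paper's. The paper does not use the Wittstock--Paulsen representation theorem at all: it invokes Fell's absorption principle ($\lambda\otimes\lambda\simeq\lambda\otimes 1_{\ell^2(\Gamma)}$) to get a normal $*$-isomorphism $\pi$ of $\mf M(\Gamma)$ into $\mf M(\Gamma)\otimes\mf M(\Gamma)$ with $\pi(\lambda(x))=\lambda(x)\otimes\lambda(x)$, composes $T\otimes\id$ with the $\mathrm{tr}\otimes\mathrm{tr}$-preserving conditional expectation onto the range of $\pi$, and checks that the resulting map $S$ satisfies $S\lambda(x)=\phi_T(x)\lambda(x)$ with $\|S\|_{CB}\le\|T\|_{CB}$; that is, it constructs $m_{\phi_T}^*$ directly and concludes from the definition of $M_0A(\Gamma)$. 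You instead prove the trace identity $\phi_T(xy^{-1})=\langle T(\lambda(xy^{-1}))\delta_y,\delta_x\rangle$ (which is correct --- conjugation invariance of $\mathrm{tr}$ does exactly what you claim), feed in a Stinespring--Wittstock decomposition $T(a)=V_1^*\pi(a)V_2$ with $\pi$ unital, and read off a Herz--Schur factorization $\xi(y)=\pi(\lambda(y))^*V_2\delta_y$, $\eta(x)=\pi(\lambda(x))^*V_1\delta_x$, closing with Theorem~\ref{t:Theorem3.2} (the $xy^{-1}$ versus $y^{-1}x$ convention is harmless: replace $\xi,\eta$ by $x\mapsto\xi(x^{-1})$, $y\mapsto\eta(y^{-1})$, exactly as in the proof of Lemma~\ref{l:Lemma2.1}(2)). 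What each buys: the paper's argument is self-contained modulo Fell absorption and avoids the representation theorem for completely bounded maps, and its conditional-expectation mechanism handles the $C^*$- and von Neumann cases uniformly with explicit control of normality; your argument is shorter and more transparent once Wittstock--Paulsen and the Gilbert/Bo\.zejko--Fendler characterization are granted, and it makes visible the Herz--Schur data behind $\phi_T$. One small point to make explicit if you write this up: for the $\mf M(\Gamma)$ case you should note that Wittstock's theorem applies to any completely bounded map from a $C^*$-algebra into $B(\ell^2(\Gamma))$, no normality of $T$ being required. Your part (2) coincides with the paper's proof.
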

\begin{proof}
(1): Since $\lambda\otimes\lambda$ is unitarily equivalent to $\lambda \otimes 1_{\ell^2(\Gamma)}$ where $1_{\ell^2(\Gamma)}$ is the trivial representation of $\Gamma$ on $\ell^2(\Gamma)$ (cf. \cite[13.11.3]{MR0458185} and \cite{MR0150241}),
there exists a normal *-isomorphism $\pi$ of $\mf M =\mf M(\Gamma)$ onto a von Neumann subalgebra $\mf N$ of $\mf M\otimes\mf M$ such that
$$
\pi(\lambda(x))=\lambda(x)\otimes\lambda(x),\qquad x\in G.
$$
Let $\epsilon$ be the normal conditional expectation of $\mf M\otimes \mf M$ onto $\mf N$ that leaves $\mathrm{tr}\otimes\mathrm{tr}$ invariant.
Then, since $\varepsilon$ is orthogonal with respect to the inner product given by $\mathrm{tr}\otimes\mathrm{tr}$, one gets easily that
$$
\varepsilon(\lambda(x)\otimes\lambda(y))=\left\{
   \begin{array}{ll}
     \lambda(x)\otimes\lambda(x), & x= y, \\
     0, & x\ne y.
   \end{array}
 \right.
$$
Put $\rho=\pi^{-1}\varepsilon$. Since
$$
C_r^*(\Gamma)=\overline{\Span\{\lambda(g)\mid g\in\Gamma\}}^{\text{norm}}
$$
one has
$$
\pi(C_r^*(\Gamma))\subseteq C_r^*(\Gamma)\otimes C_r^*(\Gamma),
$$
and
$$
\rho(C_r^*(\Gamma)\otimes C_r^*(\Gamma))\subseteq C_r^*(\Gamma).
$$
Hence, if $T$ is completely bounded of $C_r^*(\Gamma)$ into itself,
$$
S=\rho\circ (T\otimes \id_{C_r^*(\Gamma)})\circ\pi
$$
is a well-defined completely bounded map on $C_r^*(G)$ and $\|S\|_{CB}\le \|T\|_{CB}$.
Now, 
$$T\lambda(x)=\sum_{y\in G} c_{x,y}\lambda(y),$$
where $c_{x,y}\in \C$ and the sum is convergent in the $\|\ \|_2$-norm associated with $\mathrm{tr}$. Hence
$$
(T\otimes\id_{C_r^*(\Gamma)})\pi(\lambda(x))=\sum_{y\in G} c_{x,y} \lambda(y)\otimes \lambda(x).
$$
By the orthogonality property of $\varepsilon$ we have
$$
\varepsilon(T\otimes\id_{C_r^*(\Gamma)})\pi(\lambda(x))=c_{x,x}\lambda(x)\otimes\lambda(x).
$$ 
Hence
$$
S\lambda(x) = c_{x,x}\lambda(x), \qquad x\in G.
$$ 
But 
$$
c_{x,x} = \langle T\lambda(x),\lambda(x)\rangle_{\mathrm{tr}} = \phi_T(x).
$$ 
This shows that (with the notation of \cite{MR784292}) $S=m_{\phi_T}^*$, and hence $\phi_T\in M_0A(\Gamma)$ and $\|\phi_T\|_{M_0A(G)}=\|S\|_{CB}\leq\|T\|_{CB}$, because $\pi$, $\pi^{-1}$ and $\varepsilon$ are completely positive.
If instead $T$ is a completely bounded map on $\mf M(\Gamma)$, then we let,
$$
S=\rho\circ(T\otimes\id_{\mf M(\Gamma)})\circ \pi,
$$
and the same proof applies (cf. \cite[Section~1]{MR784292}).

(2): It is sufficient to consider the rank one case:
A rank one map on $C_r^*(\Gamma)$ is of the form
$$
T(a)=f(a)b\qquad \text{where }f\in C_r^*(\Gamma)^*, b\in C_r^*(\Gamma).
$$
Thus
$$
\phi_T(x)=\mathrm{tr}(\lambda(x)^*b)f(\lambda(x)).
$$
But $x\mapsto\mathrm{tr}(\lambda(x)^*b)$ is in $\ell^2(\Gamma)$, because $(\lambda(x))_{x\in \Gamma}$ is an orthonormal family in $L^2(C^*_r(\Gamma),\mathrm{tr})$, and $x\mapsto f(\lambda(x))$ is bounded. This proves (2) in the $C_r^*(\Gamma)$-case. The $\mf M(\Gamma)$-case follows by the same arguments.
\end{proof}

\begin{thm}\label{thm:operatoralgebras}
  Let $\Gamma$ be a discrete group and let $k\ge 1$. Then the following three conditions are equivalent.
  \begin{enumerate}
    \item $A(\Gamma)$ has an approximate unit $(\phi_\alpha)$ , such that $\|\phi_\alpha\|_{M_0A(\Gamma)}\le k$ for all $\alpha$.
    \item There exists a net $(T_\alpha)$ of finite rank operators on $C_r^*(\Gamma)$ such that $\|T_\alpha\|_{CB}\le k$ for all $\alpha$, and such that
    $$\|T_\alpha x - x \| \to 0 \qquad \text{for all } x\in C_r^*(\Gamma).
    $$
    \item There exists a net $(T_\alpha)$ of $\sigma$-weakly continuous finite rank maps on $\mf M(\Gamma)$, such that $\|T_\alpha\|_{CB}\le k$ for all $\alpha$ and
        $$
        \langle \phi, T_\alpha x - x\rangle \to 0, \qquad \text{for all } x\in \mf M(\Gamma),\ \phi\in\mf M(\Gamma)_*.
        $$
  \end{enumerate}
\end{thm}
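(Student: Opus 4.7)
The strategy is to prove $(2)\Rightarrow(1)$ and $(3)\Rightarrow(1)$ via Lemma~\ref{l:lemma2.5}, and $(1)\Rightarrow(2)$, $(1)\Rightarrow(3)$ by directly constructing multiplier maps. The key preliminary observation is that for a discrete group $\Gamma$ one has $\ell^2(\Gamma)\subseteq A(\Gamma)$ with $\|\xi\|_{A(\Gamma)}\le\|\xi\|_2$: indeed $\xi=\xi*\tilde\delta_e$ realizes $\xi$ as a matrix coefficient of $\lambda$. In particular the finitely supported functions lie in $A(\Gamma)$ and are dense there (truncate the $\ell^2$ factors in a decomposition $\phi=f*\tilde g$). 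A second routine observation: testing an approximate unit $(\phi_\alpha)\subseteq A(\Gamma)$ against $\delta_x$ yields $\|\phi_\alpha\delta_x-\delta_x\|_{A(\Gamma)}=|\phi_\alpha(x)-1|$, so $\phi_\alpha\to 1$ pointwise automatically.

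\textbf{The directions $(2)\Rightarrow(1)$ and $(3)\Rightarrow(1)$} I would handle in parallel by setting $\phi_\alpha=\phi_{T_\alpha}$. Lemma~\ref{l:lemma2.5}(1) provides $\phi_\alpha\in M_0A(\Gamma)$ with $\|\phi_\alpha\|_{M_0A}\le k$; Lemma~\ref{l:lemma2.5}(2) provides $\phi_\alpha\in\ell^2(\Gamma)$; together with the preliminary this gives $\phi_\alpha\in A(\Gamma)$. Pointwise convergence $\phi_\alpha(x)\to 1$ is immediate: $T_\alpha\lambda(x)\to\lambda(x)$ in $C^*$-norm (case (2)) or $\sigma$-weakly (case (3)), so $\lambda(x)^*T_\alpha\lambda(x)\to 1$ in the corresponding topology, and $\mr{tr}$ is continuous, respectively normal. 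To promote this to the approximate-unit property, given $\psi\in A(\Gamma)$ and $\varepsilon>0$, pick a finitely supported $\psi_F\in A(\Gamma)$ with $\|\psi-\psi_F\|_{A(\Gamma)}<\varepsilon$ and split
\[
\|\phi_\alpha\psi-\psi\|_{A(\Gamma)}\le\|\phi_\alpha\|_{MA}\|\psi-\psi_F\|_{A(\Gamma)}+\|(\phi_\alpha-1)\psi_F\|_{A(\Gamma)}+\|\psi_F-\psi\|_{A(\Gamma)}.
\]
The outer terms are bounded by $(k+1)\varepsilon$; the middle term is bounded by $\|(\phi_\alpha-1)\psi_F\|_2$ (via the preliminary, on the finite support of $\psi_F$), which tends to $0$ by pointwise convergence on $F$.

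\textbf{The directions $(1)\Rightarrow(2)$ and $(1)\Rightarrow(3)$:} Starting from $(\phi_\alpha)$ as in (1), I would approximate each $\phi_\alpha$ in $A$-norm by a finitely supported $\psi_\alpha\in A(\Gamma)$ and rescale by $k/(k+\|\psi_\alpha-\phi_\alpha\|_{A(\Gamma)})$ to restore the exact bound $\|\psi_\alpha\|_{M_0A}\le k$, while preserving both the approximate-unit property and pointwise convergence to $1$. Set $T_\alpha=m_{\psi_\alpha}^*$ on $C_r^*(\Gamma)$ (for (2)) or on $\mf M(\Gamma)$ (for (3)). Finite support of $\psi_\alpha$ makes $T_\alpha$ finite-rank with range in $\Span\{\lambda(y):y\in\supp(\psi_\alpha)\}$; on $\mf M(\Gamma)$, $T_\alpha$ is the Banach-space dual of $m_{\psi_\alpha}\colon A(\Gamma)\to A(\Gamma)$, hence $\sigma$-weakly continuous. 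The CB-bound $\|T_\alpha\|_{CB}\le\|\psi_\alpha\|_{M_0A}\le k$ is standard. Convergence for (2) is checked on the dense subspace $\lambda(\C\Gamma)$, where $T_\alpha\lambda(x)=\psi_\alpha(x)\lambda(x)\to\lambda(x)$ in norm, and extended to $C_r^*(\Gamma)$ by the uniform $\|T_\alpha\|$-bound; for (3), $\langle\phi,T_\alpha a-a\rangle=\langle\psi_\alpha\phi-\phi,a\rangle\to 0$ directly from the approximate-unit property.

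\textbf{Main obstacle.} The essential point is verifying $\phi_{T_\alpha}\in A(\Gamma)$ in the directions $(2),(3)\Rightarrow(1)$: Lemma~\ref{l:lemma2.5} only places $\phi_{T_\alpha}$ in $M_0A(\Gamma)\cap\ell^2(\Gamma)$, and for a general locally compact group this would not suffice. The inclusion $\ell^2(\Gamma)\subseteq A(\Gamma)$, particular to discrete groups, closes the gap, after which the remainder is routine bookkeeping with multiplier norms and finitely supported approximations.
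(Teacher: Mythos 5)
Your proposal is correct and follows essentially the same route as the paper: the directions $(2)\Rightarrow(1)$ and $(3)\Rightarrow(1)$ use Lemma~\ref{l:lemma2.5} together with the inclusion $\ell^2(\Gamma)\subseteq A(\Gamma)$, pointwise convergence, and the uniform $M_0A$-bound to pass from finitely supported $\psi$ to all of $A(\Gamma)$, exactly as in the text. For $(1)\Rightarrow(2)$ and $(1)\Rightarrow(3)$ the paper simply cites \cite{MR784292}; your explicit construction via finitely supported truncations and $T_\alpha=m_{\psi_\alpha}^*$ is the standard argument behind that citation and is sound.
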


\begin{proof}
(1)$\implies$(2) and (1)$\implies$(3) follows from \cite{MR784292}.

(2)$\implies$(1): Assume $T_\alpha$  are finite rank operators of $C_r^*(\Gamma)$ into itself, such that $\sup_\alpha\|T_\alpha\|_{CB}\le k$ and
$$\|T_\alpha a -a \|\to 0\qquad\text{for all } a\in C_r^*(\Gamma).$$
Let $\phi_\alpha=\phi_{T_\alpha}$ be as defined in the preceding lemma. Then $\phi_\alpha\in \ell^2(\Gamma)\subseteq A(\Gamma)$ and
$$
\sup_\alpha \|\phi_\alpha\|_{M_0A(\Gamma)}< \infty,
$$
and since $\phi_\alpha(x)=\mathrm{tr}(\lambda(x)^*T_\alpha(\lambda(x)))\to 1$ for all $x\in\Gamma$:
\begin{equation}\label{e:starpage30}
  \|\phi_\alpha\psi-\psi\|_{A(\Gamma)}\to 0,
\end{equation}
for all $\psi\in A(\Gamma)$ with finite support, i.e. for all $\psi\in A(\Gamma)\cap C_c(\Gamma)$ which form a dense subset of $A(\Gamma)$. Using that $\|\phi_\alpha\|_{MA(\Gamma)}\le \|\phi_\alpha\|_{M_0A(\Gamma)}\le k$ for all $\alpha$, one gets that (\ref{e:starpage30}) holds for all $\psi\in A(\Gamma)$.

(3)$\implies$(1): Let $(T_\alpha)$ be a net satisfying the conditions in (3). Since the functional,
$$
a\mapsto\mathrm{tr}(\lambda(x)^*a), \quad a\in\mf M(\Gamma)
$$
is in $\mf M(\Gamma)_*$ ($x$ fixed) we have that 
$$\mathrm{tr}(\lambda(x)^*T_\alpha(\lambda(x)))\to 1,\qquad \forall x\in G.
$$
The proof can now be completed exactly as in (2)$\implies$(1) by use of Lemma \ref{l:lemma2.5}.
\end{proof}
\begin{cor}\label{cor:27}
  Let $\Gamma$ be a lattice in a simple Lie group of real rank at least $2$ with finite center. Then 
  \begin{enumerate}
    \item $C_r^*(\Gamma)$ does not have the completely bounded approximation property.
    \item $\mf M(\Gamma)$ does not have the ($\sigma$-weak) completely bounded approximation property.
  \end{enumerate}
\end{cor}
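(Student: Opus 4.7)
The plan is simply to chain together the results already established in the paper. First, Theorem \ref{thm:1} tells us that for a simple Lie group $G$ of real rank at least $2$ with finite center, $A(G)$ has no approximate unit bounded in the multiplier norm $\|\cdot\|_{MA(G)}$. Since $\|\phi\|_{MA(G)}\le\|\phi\|_{M_0A(G)}$ for every $\phi\in M_0A(G)$, an approximate unit in $A(G)$ that is bounded in the completely bounded multiplier norm would in particular be bounded in $\|\cdot\|_{MA(G)}$. Hence $A(G)$ cannot have any approximate unit bounded in $\|\cdot\|_{M_0A(G)}$ either.

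Next, I would invoke Theorem \ref{thm:lattice} in its contrapositive form, applied to the lattice $\Gamma\subseteq G$ (noting that $G$ is second countable, so the hypotheses of Theorem \ref{thm:lattice} are satisfied). The theorem asserts the equivalence, for each $k\in[1,\infty[$, between the existence of a $k$-bounded (in $\|\cdot\|_{M_0A}$) approximate unit in $A(G)$ and one in $A(\Gamma)$. Since no such $k$ works for $A(G)$, none can work for $A(\Gamma)$ either; in other words, $A(\Gamma)$ admits no approximate unit bounded in the completely bounded multiplier norm. (This is, essentially, Corollary \ref{c:Corollary2.4} restricted to the finite center case.)

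Finally, I would apply Theorem \ref{thm:operatoralgebras} to the discrete group $\Gamma$. The failure just established is precisely the negation of condition (1) in Theorem \ref{thm:operatoralgebras} for every $k\ge 1$. By the equivalence (1)$\iff$(2)$\iff$(3) of that theorem, conditions (2) and (3) must also fail for every $k$: there is no net of finite rank operators $(T_\alpha)$ on $C_r^*(\Gamma)$ with $\sup_\alpha\|T_\alpha\|_{CB}<\infty$ converging to the identity in norm on $C_r^*(\Gamma)$, and there is no net of $\sigma$-weakly continuous finite rank operators on $\mathfrak{M}(\Gamma)$ with uniformly bounded cb-norms converging point-$\sigma$-weakly to the identity. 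These are exactly the statements that $C_r^*(\Gamma)$ lacks the CBAP and $\mathfrak{M}(\Gamma)$ lacks the ($\sigma$-weak) CBAP, respectively.

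There is essentially no obstacle here; the work has all been done in Theorems \ref{thm:1}, \ref{thm:lattice}, and \ref{thm:operatoralgebras}, together with Lemma \ref{l:lemma2.5} which underlies the latter. The only minor point to be careful about is the direction of the inequality $\|\cdot\|_{MA}\le\|\cdot\|_{M_0A}$, used to pass from Theorem \ref{thm:1} (which rules out MA-bounded approximate units, a weaker statement) to the absence of $M_0A$-bounded approximate units (the stronger statement needed to feed into Theorems \ref{thm:lattice} and \ref{thm:operatoralgebras}).
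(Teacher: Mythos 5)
Your proposal is correct and follows exactly the route the paper intends: Theorem~\ref{thm:1} (strengthened via $\|\cdot\|_{MA}\le\|\cdot\|_{M_0A}$) combined with Theorem~\ref{thm:lattice} gives Corollary~\ref{c:Corollary2.4}, and then Theorem~\ref{thm:operatoralgebras} converts the failure of condition (1) into the failure of (2) and (3). Your remark about the direction of the multiplier-norm inequality is the right point to flag, and it is handled correctly.
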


\section{Appendix: On completely bounded multipliers and Herz-Shur-Multipliers.}
In Section \ref{s:Section2}, we used Bozejko and Fendler's results \cite{MR753889} that $M_0A(G)$ coincides isometrically with the space $B_2(G)$ introduced by Herz in \cite{MR0425511}. Their result relies heavily on a characterization of $B_2(G)$ found by Gilbert (\cite[Theorem 4.7]{GilbertIII}). However, Gilbert's paper, has never been published, so for the convenience of the reader, we give below a self-contained proof of the result needed in Section \ref{s:Section2}. 

Let $a*b$  denote the Schur product $(a*b)_{ij}=a_{ij}b_{ij}$ of complex $n\times n$-matrices in $M_n(\C)$, and let $\|a\|$ be the $C^*$-norm on $M_n(\C)$, i.e. the operator norm of $a$ when $M_n(\C)$ acts on the Euclidean space $\ell^2(\{1,\ldots,n\})$. We let $\|\ \|_S$ denote the Schur multiplier norm on $M_n(\C)$, i.e.
$$
\|a\|_S=\sup\{\|a*b\|\mid b\in M_n(\C),\ \|b\|\le 1\}.
$$ 
Let $\le$ be the usual ordering in $M_n(\C)$ as a $C^*$-algebra, i.e.
$a\ge0$ iff $a=a^*$ and all eigenvalues of $a$ are nonnegative. It is well known that 
$$
a\ge 0,\ b\ge 0 \implies a*b\ge 0.
$$
Hence, if $a\ge 0$ then the operator $M_a\colon b\mapsto a*b$ is positive on the $C^*$-algebra $M_n(\C)$, and therefore,
$$
\|a\|_S=\|M_a(1)\|=
\left\|\left(
  \begin{array}{ccc}
    a_1 &  & 0 \\
     & \ddots &  \\
    0 &  & a_n \\
  \end{array}
\right)\right\|=\max\{a_1,\ldots,a_n\}
$$
for every $a\in M_n(\C)_+$ (see for instance \cite[Corollary~1]{MR0193530} or \cite[10.5.10]{MR1468230}).
The following lemma is a special case of \cite[Theorem~4.7]{GilbertIII}. The proof given here is inspired by \cite{MR733029}.
\begin{lem}[\cite{GilbertIII}]\label{l:Lemma3.1}
Let $a\in M_n(\C)$. The following three conditions are equivalent:
\begin{enumerate}
  \item $\|a\|_S\le 1$.
  \item $\exists b, c \in M_n(\C)_+$ such that $\left(
                                                  \begin{array}{cc}
                                                    b & a^* \\
                                                    a & c \\
                                                  \end{array}
                                                \right)\ge 0$
                                                and $b_{ii}\le 1$ $c_{ii}\le 1$, $i=1,\ldots,n$.
  \item There exist a Hilbert space $H$ and $2n$ vectors $\xi_1,\ldots,\xi_n$, $\eta_1$, \ldots, $\eta_n$ in the closed unit ball of $H$ such that
  $$
  a_{ij}=\langle \xi_i, \eta_j\rangle, \qquad i,j=1,\ldots,n.
  $$
\end{enumerate}
\end{lem}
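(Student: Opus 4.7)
The plan is to establish $(3)\Rightarrow(1)$, $(2)\Leftrightarrow(3)$, and the main implication $(1)\Rightarrow(3)$, which is the content of the lemma. The first three directions are routine; only the last is substantive.

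The equivalence $(2)\Leftrightarrow(3)$ amounts to the fact that positive semidefinite matrices are exactly Gram matrices. Given $(3)$, set $b_{ij}=\langle\eta_i,\eta_j\rangle$ and $c_{ij}=\langle\xi_i,\xi_j\rangle$; then the $2n\times 2n$ block matrix in $(2)$ is the Gram matrix of the tuple $(\eta_1,\ldots,\eta_n,\xi_1,\ldots,\xi_n)$, hence positive semidefinite, with diagonal entries $\|\eta_i\|^2,\|\xi_i\|^2\le 1$. Conversely, any positive semidefinite $2n\times 2n$ matrix factors as $M=L^*L$; reading off the first and last $n$ columns of $L$ produces vectors $\eta_j$ and $\xi_i$ with $\|\eta_j\|^2=b_{jj}\le 1$, $\|\xi_i\|^2=c_{ii}\le 1$, and $a_{ij}=\langle\xi_i,\eta_j\rangle$.

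For $(3)\Rightarrow(1)$, I would factor Schur multiplication through an operator composition. Define $V,W\colon \C^n\to \C^n\otimes H$ by $Ve_j=e_j\otimes\eta_j$ and $We_i=e_i\otimes\xi_i$. Each is block-diagonal, hence $\|V\|=\max_j\|\eta_j\|\le 1$ and similarly $\|W\|\le 1$. A routine matrix-entry computation then yields $a\ast b = W^*(b\otimes 1_H)V$ for every $b\in M_n(\C)$ (up to a transpose or conjugation of $a$, which is immaterial for $\|\cdot\|_S$); consequently $\|a\ast b\|\le\|W\|\,\|b\otimes 1_H\|\,\|V\|\le\|b\|$, i.e., $\|a\|_S\le 1$.

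The main obstacle is $(1)\Rightarrow(3)$, which I would settle via Hahn--Banach separation in the finite-dimensional space $M_n(\C)$. Let $\mathcal{K}$ be the convex balanced set of matrices admitting the representation in $(3)$; a standard compactness argument (restricting to $\dim H\le 2n$) shows $\mathcal{K}$ is closed. Assuming $\|a\|_S\le 1$ but $a\notin\mathcal{K}$, Hahn--Banach produces $\mu\in M_n(\C)$ with $\Re\langle\mu,a'\rangle\le 1$ for all $a'\in\mathcal{K}$ and $\Re\langle\mu,a\rangle>1$, using the pairing $\langle\mu,a\rangle=\sum_{ij}\bar\mu_{ij}a_{ij}$. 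The crux is to convert the bound on $\mathcal{K}$ into a factorization $\mu_{ij}=\bar y_i x_j b_{ij}$ with unit vectors $x,y\in\C^n$ and a contraction $b\in M_n(\C)$: testing $\mu$ first against the rank-one Gram matrices $a'_{ij}=\bar y_i x_j$ gives $\|\mu\|\le 1$, and refining the test against more general Gram matrices pins down the desired factorization. Once this is in hand, $\Re\langle\mu,a\rangle=\Re\langle(a\ast b)x,y\rangle\le\|a\ast b\|\le\|a\|_S\le 1$, contradicting $\Re\langle\mu,a\rangle>1$. An alternative path is via the Wittstock/Paulsen representation of the completely bounded Schur multiplier $M_a$ (the identity $\|M_a\|_{\mr{cb}}=\|a\|_S$ being a classical block-matrix argument), whose Stinespring-type dilation yields the vectors $\xi_i,\eta_j$ directly.
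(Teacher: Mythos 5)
Your handling of $(2)\Leftrightarrow(3)$ (Gram matrices) and of $(3)\Rightarrow(1)$ (the factorization $a*b=W^{*}(b\otimes 1_{H})V$ with block-diagonal contractions $V,W$) is correct; the paper reaches $(1)$ from $(2)$ instead, via positivity of Schur multiplication by a positive matrix with unit diagonal, but these are interchangeable routine steps. The genuine problem is $(1)\Rightarrow(3)$, which you correctly identify as the crux and then do not prove. Your plan is to separate $a$ from the convex balanced closed set $\mathcal{K}$ of matrices admitting a representation as in $(3)$ and to reach a contradiction from a structural description of the polar: every $\mu$ with $\sup_{a'\in\mathcal{K}}|\langle\mu,a'\rangle|\le 1$ should factor as $\mu_{ij}=\bar y_i x_j b_{ij}$ with $\|x\|\,\|y\|\,\|b\|\le 1$ (or lie in the closed convex hull of such matrices). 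But this description is not something you can assert: a direct computation shows that the polar of the set $\{(\bar y_i x_j b_{ij}):\|x\|\,\|y\|\,\|b\|\le 1\}$ is exactly $\{a:\|a\|_S\le 1\}$, so by the bipolar theorem the inclusion $\mathcal{K}^{\circ}\subseteq\overline{\mathrm{conv}}\{(\bar y_i x_j b_{ij})\}$ is \emph{equivalent} to $\{a:\|a\|_S\le 1\}\subseteq\mathcal{K}$, i.e.\ to $(1)\Rightarrow(3)$ itself. Testing $\mu$ against rank-one Gram matrices only yields $\|\mu\|\le 1$, which is much weaker than the required factorization, and ``refining the test against more general Gram matrices'' is precisely where the entire content of the lemma is concentrated; no argument is supplied. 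The fallback via the Wittstock--Paulsen representation of completely bounded maps would work, but it imports a theorem of the same depth proved by the same kind of extension argument, whereas the point of this appendix is a self-contained proof (Gilbert's paper being unpublished).

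What the paper actually does is run Hahn--Banach as a \emph{positive extension} in $M_{2n}(\C)$ rather than a separation in $M_n(\C)$, thereby never needing to compute a polar set. On the real subspace $E\subseteq M_{2n}(\C)_{s.a.}$ of matrices $w=\left(\begin{smallmatrix} y & x^{*}\\ x & z\end{smallmatrix}\right)$ with $y,z$ diagonal, one defines $\phi(w)=\sum_i(y_i+z_i)+2\Re\sum_{i,j}x_{ij}a_{ij}$ and checks $\phi\ge 0$ on $E_+$: for strictly positive $w$ one has $\|z^{-1/2}xy^{-1/2}\|\le 1$, hence $\|a\|_S\le1$ makes $(a_{ij}z_i^{-1/2}x_{ij}y_j^{-1/2})$ a contraction, and Cauchy--Schwarz gives $|\sum x_{ij}a_{ij}|\le(\sum y_j)^{1/2}(\sum z_i)^{1/2}$, so $\phi(w)\ge 0$. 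A Hahn--Banach extension is automatically positive because its norm equals its value at $1_{2n}$, and is therefore implemented by a positive matrix $d=\left(\begin{smallmatrix} b & a^{*}\\ a & c\end{smallmatrix}\right)$ with unit diagonal; this is $(2)$, and $(3)$ follows by taking row vectors of $d^{1/2}$. To salvage your approach you would have to establish the dual description of $\mathcal{K}^{\circ}$ by an independent argument; as written, the central step of your proof is circular.
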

\begin{proof}
(1)$\implies$(2): Consider the real subspace $E$ of $M_{2n}(\C)_{s.a.}$ consisting of the matrices of the form
$$\left(
  \begin{array}{cc}
    y & x^* \\
    x & z \\
  \end{array}
\right)
$$
where $y$, $z$ are self-adjoint diagonal matrices $y=\diag(y_1,\ldots,y_n)$, $z=\diag(z_1,\ldots,z_n)$. Let $\phi\colon E\to\C$ be the linear form given by 
\begin{equation}\label{e:starpage32}
  \phi\left(
  \begin{array}{cc}
    y & x^* \\
    x & z \\
  \end{array}
\right)
=
\sum_{i=1}^n (y_i+z_i)+2\Re\left(\sum_{i,j=1}^n x_{ij}a_{ij}\right).
\end{equation}
We will prove that $\phi\ge0$, i.e. $\phi(w)$ is non-negative on positive hermitian matrices in $E$.
Note that $1_{2n}\in E$. Since
$$
\phi(w)\ge0 \iff \phi(w+\varepsilon 1_{2n})\ge0,\qquad \forall \varepsilon >0,
$$
it is sufficient to check that $\phi(w)\ge 0$ for all strictly positive
$$
w=\left(
  \begin{array}{cc}
    y & x^* \\
    x & z \\
  \end{array}
\right)
$$
i.e. those $w\in E_+$ for which $y_i>0, z_i>0, i=1,\ldots,n$.
However, $w\ge0$ implies that 
$$
\left(
  \begin{array}{cc}
    1_n & y^{-1/2}x^*z^{-1/2} \\
    z^{-1/2}xy^{-1/2} & 1_n \\
  \end{array}
\right)\ge0
$$
which is equivalent to $\|z^{-1/2}xy^{-1/2}\|\le 1$\,\, ($C^*$-norm).
Since $\|a\|_S\le 1$, the matrix,
$$
e=(a_{ij}z_i^{-1/2}x_{ij} y_j^{-1/2})_{i,j=1,\ldots,n}
$$
has also $C^*$-norm $\le1$. Hence
$$
\left|\sum_{i,j=1}^n x_{ij}a_{ij} \right| = \left|\sum_{i,j=1}^n e_{ij}y_{j}^{1/2}z_{i}^{1/2}  \right|\le 
\Bigg(\sum_{j=1}^n y_j \Bigg)^{1/2}\Bigg(\sum_{i=1}^n  z_i  \Bigg)^{1/2}
$$
which implies that 
$$
\phi\left(
  \begin{array}{cc}
    y & x^* \\
    x & z \\
  \end{array}
\right)\ge 
\sum_{j=1}^n y_j + \sum_{i=1}^n z_i - 2\Bigg(\sum_{j=1}^n y_j \Bigg)^{1/2}\Bigg(\sum_{i=1}^n  z_i  \Bigg)^{1/2}\ge0.
$$
Hence $\phi$ is a positive functional on $E\subseteq M_{2n}(\C)$.

Let $\tilde\phi$ be a Hahn-Banach extension of $\phi$ to $M_{2n}(\C)_{s.a.}$. Then $\tilde\phi\ge0$,
because $\|\tilde\phi\|=\|\phi\|=\phi(1_{2n})$, and because 
$$
0\le w\le 1_{2n}\implies \tilde\phi(1_{2n}-w)\le \|\tilde \phi\| \implies \tilde\phi(w)\ge0.
$$ 
Hence, there exists a positive hermitian  matrix

$$(d_{ij})_{i,j=1,\ldots,2n}$$
such that
$$
\tilde \phi(w)=\sum_{i,j=1}^{2n} d_{ij}w_{ij}, \qquad w\in M_{2n}(\C)_{s.a.}.
$$
For diagonal matrices $w=\diag(y_1,\ldots,y_n,z_1,\ldots,z_n)$,
$\tilde\phi(w)=\sum_{i=1}^n(y_i+z_i)$ by (\ref{e:starpage32}). Therefore $d_{ii}=1$, $i=1,\ldots,2n$.
Moreover, if $w$ is of the form
$$
w=\left(
    \begin{array}{cc}
      0 & x^* \\
      x & 0 \\
    \end{array}
  \right),\qquad x\in M_n(\C).
$$  
then by (\ref{e:starpage32})
$$
2\Re\left(\sum_{i,j=1}^n a_{ij}x_{ij}  \right)=\sum_{i,j=1}^n d_{n+i,j}x_{ij}+\sum_{i,j=1}^n d_{i,n+j}(x^*)_{ij}= 2\Re\left(\sum_{i,j=1}^n d_{n+i,j} x_{ij} \right)
$$
for all $(x_{ij})\in M_n(\C)$. Hence $a_{ij}=d_{n+i,j}$, i.e.
$$
d=\left(
    \begin{array}{cc}
      b & a^* \\
      a & c \\
    \end{array}
  \right)
$$
where $b_{ii}=c_{ii}=1$, $i=1,\ldots,n$. This proves (2).

(2)$\implies$(3): Let 
$$
d=\left(
    \begin{array}{cc}
      b & a^* \\
      a & c \\
    \end{array}
  \right)
$$
be as in (2). Let $H=\ell^2(\{1,\ldots,2n\})$, and let $\eta_1,\ldots,\eta_n,\xi_1,\ldots,\xi_n$ be the row vectors of the operator $f=d^{1/2}$.
Since 
$$d_{ij}=\sum_k f_{ik} \overline{f_{jk}}$$
we get $a_{ij}=\langle\xi_i,\eta_j\rangle$, and $\|\xi_i\|^2=d_{n+i,n+i}\le 1$, $\|\eta_i\|^2=d_{ii}\le 1$, for $i=1\ldots, n$.

(3)$\implies$(2):
Let $\xi_i,\eta_j$ be as in (3), put 
$$
b_{ij}=\langle \eta_i,\eta_j\rangle, \qquad c_{ij}=\langle \xi_i,\xi_j\rangle \qquad i,j=1\ldots,n,
$$ 
and let 
$$
d=\left(
    \begin{array}{cc}
      b & a^* \\
      a & c \\
    \end{array}
  \right).
$$
If we let $\eta_{n+i}=\xi_i$, $i=1,\ldots,n$. Then
$$
d_{ij}=\langle \eta_i,\eta_j\rangle, \qquad i,j=1,\ldots,2n.
$$
This implies that $d$ is positive hermitian, because, for $\lambda_1,\ldots,\lambda_{2n}\in\C$,
$$
\sum_{i,j=1}^{2n} d_{ij} \lambda_i\overline{\lambda_j} = \left\|\sum_{i=1}^{2n} \lambda_i\eta_i  \right\|^2 \ge 0.
$$
(2)$\implies$(1): If $$
d=\left(
    \begin{array}{cc}
      b & a^* \\
      a & c \\
    \end{array}
  \right)\ge0
$$
and  $b_{ii}\le 1$, $c_{ii}\le 1$, then the map $e\mapsto d*e$, $e\in M_{2n}(\C)$ is positive and $\|d*e\|\le \max_{1\le i\le2n}{d_{ii}}\|e\|$. Taking $e$ of the form $e=\left(
    \begin{array}{cc}
      0 & 0 \\
      x & 0 \\
    \end{array}
  \right)$,
  $x\in M_n(\C)$, we see that $\|a*x\|\le \|x\|$, $x\in M_n(\C)$. This proves (2)$\implies$(1).
\end{proof}  

\begin{thm}[\cite{MR753889}]\label{t:Theorem3.2}
Let $\phi$ be a continuous function on a locally compact group $G$, and let $k\ge 0$. Then the following conditions are equivalent.
\begin{enumerate}
  \item $\phi\in M_0A(G)$ and $\|\phi\|_{M_0A(G)} \le k$.
  \item For every finite set $x_1,\ldots,x_n$ in $G$, 
  $$\|\phi(x_j^{-1}x_i)_{i,j=1,\ldots,n}\|_S\le k
  $$
  \item There exist a Hilbert space $H$ and two bounded maps $\xi,\eta\colon G\to H$ such that
  $$ 
  \phi(y^{-1}x)=\langle \xi(x), \eta(y) \rangle,\qquad \forall x,y\in G
  $$
  and
  $$
  (\sup_{x\in G} \|\xi(x)\|)(\sup_{y\in G}\|\eta(y)\|)\le k.
  $$
	\item There exist a Hilbert space $H$ and two bounded maps $\xi,\eta\colon G\to H$ as in (3) with the additional property that the coordinate functions $\xi_i$ and $\eta_i$ (with respect to any orthonormal basis $(e_i)_{i\in I}$ of $H$) are continuous and the families $\{\xi_i\}_{i\in I}$ and $\{\eta_i\}_{i\in I}$ are locally countable.
\end{enumerate}
\end{thm}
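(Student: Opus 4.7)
The plan is to prove the cyclic chain $(3) \Rightarrow (1) \Rightarrow (2) \Rightarrow (3)$ together with $(3) \Rightarrow (4)$; the reverse $(4) \Rightarrow (3)$ is immediate, and Lemma \ref{l:Lemma3.1} supplies the crucial finite-dimensional input. For $(3) \Rightarrow (1)$, I would realize $m_\phi^*$ as Schur multiplication on the translation-invariant matrix kernels of operators in $\mf M(G)$: the multiplier corresponds to Schur multiplication by a kernel that the bi-representation factors in Gram form $\langle \xi(\cdot), \eta(\cdot)\rangle$, yielding a factorization $m_\phi^*(T) = V^*(T \otimes 1_H) W$ through multiplication operators $V, W \colon L^2(G) \to L^2(G; H)$ with $\|V\| \leq \|\xi\|_\infty$ and $\|W\| \leq \|\eta\|_\infty$. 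Hence $\|m_\phi^*\|_{CB} \leq \|\xi\|_\infty \|\eta\|_\infty \leq k$, establishing (1).

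For $(1) \Rightarrow (2)$, fix distinct $x_1, \dots, x_n \in G$ and $b \in M_n(\C)$ with $\|b\| \leq 1$. Form the element $X = D^* (b \otimes 1) D \in M_n(\C) \otimes \mf M(G)$, where $D = \sum_{i=1}^n e_{ii} \otimes \lambda(x_i)$ is a diagonal unitary; then $\|X\| = \|b\|$ and the $(i,j)$-entry of $X$ equals $b_{ij}\,\lambda(x_i^{-1}x_j)$. Applying $\id_{M_n} \otimes m_\phi^*$ (of CB norm $\leq k$) produces a matrix of norm at most $k\|b\|$, and compressing by an isometry $W \colon \C^n \to \C^n \otimes L^2(G)$ --- in the discrete case $We_j = e_j \otimes \delta_{x_j^{-1}}$, replaced by sharply peaked $L^2$-approximate deltas for general $G$, with continuity of $\phi$ guaranteeing convergence of the limit --- extracts the scalar matrix $[b_{ij}\,\phi(x_i^{-1}x_j)]$ with operator norm at most $k\|b\|$. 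Transposition invariance of the Schur norm then yields (2).

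For $(2) \Rightarrow (3)$, I would apply Lemma \ref{l:Lemma3.1} to the rescaled matrix $k^{-1}[\phi(x_j^{-1}x_i)]$ over each finite $F \subset G$, obtaining a Hilbert space $H_F$ and unit-ball vectors $\xi_x^F, \eta_x^F$ ($x \in F$) such that $k^{-1}\phi(y^{-1}x) = \langle \xi_x^F, \eta_y^F\rangle$ for $x, y \in F$. Fix a free ultrafilter $\mc U$ on the directed set of finite subsets of $G$ (cofinal with inclusion) and form the ultraproduct $H = \prod_F H_F / \mc U$; the bounded families $(\xi_x^F), (\eta_x^F)$ descend to maps $\xi, \eta \colon G \to H$ in the closed unit ball, and the ultralimit of inner products reproduces $k^{-1}\phi(y^{-1}x) = \langle \xi(x), \eta(y)\rangle$ on all of $G \times G$. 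Rescaling gives (3). For $(3) \Rightarrow (4)$, I would replace $H$ by the closed linear span of $\{\eta(y) : y \in G\}$ (which does not affect any inner product $\langle \xi(x), \eta(y)\rangle$), regularize $\xi, \eta$ by convolution against a compactly supported continuous approximate identity in $L^1(G)$ using continuity of $\phi$ to preserve the bi-representation, and choose an orthonormal basis of $H$ compatible with the separable subspaces $\overline{\Span}\,\xi(K), \overline{\Span}\,\eta(K)$ over compact $K \subset G$ to obtain the local countability condition.

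The principal obstacle will be the ultraproduct step in $(2) \Rightarrow (3)$: one must verify that the limiting inner products in $H$ correctly reproduce $\phi(y^{-1}x)$ for \emph{every} pair $(x,y) \in G \times G$ (not just those in some common finite index), which relies on the cofinality of the indexing net guaranteeing that every pair eventually lies in some $F$, together with the well-definedness of the ultraproduct Hilbert space. A second delicate point is $(3) \Rightarrow (4)$, where upgrading an abstract bounded vector-valued representation to one with continuous, locally countable coordinate functions requires a structural use of continuity of $\phi$ in combination with compactness of norm balls and separability of images of compact sets.
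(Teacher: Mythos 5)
Your overall architecture differs from the paper's in one crucial respect: the paper proves the cycle $(1)\Rightarrow(2)\Rightarrow(3)\Rightarrow(4)\Rightarrow(1)$, so the only implication landing on (1) starts from (4), not from (3). This is not an accident, and your proposed $(3)\Rightarrow(1)$ has a genuine gap. In condition (3) the maps $\xi,\eta\colon G\to H$ are merely \emph{bounded}; nothing makes them measurable, let alone essentially separably valued. Your factorization $m_\phi^*(T)=V^*(T\otimes 1_H)W$ through ``multiplication operators'' $V,W\colon L^2(G)\to L^2(G;H)$ requires $x\mapsto\xi(x)$ and $x\mapsto\eta(x)$ to be at least weakly measurable with locally separable range for $Vf$ to be an element of $L^2(G;H)$ at all; for nonseparable $H$ and arbitrary bounded $\xi$ this simply fails. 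The extra hypotheses in (4) --- continuity of the coordinate functions and local countability of $\{\xi_i\}$, $\{\eta_i\}$ --- exist precisely to legitimize this construction: the paper sets $a_i=m(\check\xi_i)$, $b_i=m(\check\eta_i)$, defines $\Phi(s)=\sum_i b_i^*sa_i$, and uses local countability to upgrade pointwise convergence of $\sum_i\overline{\check\eta_i}\,\gamma_x(\check\xi_i)$ to $\sigma(L^\infty,L^1)$-convergence. The same defect infects your $(3)\Rightarrow(4)$: you cannot convolve the raw $\xi,\eta$ of (3) against an $L^1$ approximate identity before knowing they lie in $L^\infty(G,H)$. The paper instead first cuts down by projections so that $\{\xi(x)\}$ and $\{\eta(x)\}$ are total in $H$, deduces \emph{weak} continuity of $\xi,\eta$ from separate continuity of $(x,y)\mapsto\langle\xi(x),\eta(y)\rangle$ (which is where continuity of $\phi$ enters) combined with totality and uniform boundedness, and only then extracts local countability from $\sum_{i}\int_U|\xi_i(x)|^2\,dx<\infty$; the convolution trick you describe appears only in the paper's closing remark, \emph{after} (4) is available. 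The repair is to reroute your plan as $(2)\Rightarrow(3)\Rightarrow(4)\Rightarrow(1)$ with these arguments for the last two arrows.

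Your $(2)\Rightarrow(3)$ (Lemma \ref{l:Lemma3.1} over finite subsets, then an ultraproduct along a cofinal ultrafilter) is exactly the paper's argument and is fine. Your $(1)\Rightarrow(2)$ via $D^*(b\otimes 1)D$ and compression against peaked approximate deltas can be made to work, but the paper has a cleaner device requiring no limiting argument: the elements $f_{ij}=\lambda(x_i^{-1}x_j)\otimes e_{ij}$ form a system of matrix units, so their span is $*$-isomorphic to $M_n(\C)$ and the norm identification is exact; applying $M_\phi\otimes i_n$ then gives the Schur bound for the matrix and its transpose directly.
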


\begin{proof}
  (1)$\implies$(2) (cf. \cite{MR753889}): Let $x_1,\ldots,x_n\in G$ and let $(e_{ij})_{i,j=1}^n$ be the matrix units in $M_n(\C)$. Put
  $$
  f_{ij}=\lambda(x_i^{-1}x_j) \otimes e_{ij} \in \mf M(G)\otimes M_n(\C).
  $$
  It is elementary to check that $f_{ij}^*=f_{ji}$, $f_{ij}f_{kl}=\delta_{jk}f_{il}$ and $\sum_{i=1}^n f_{ii} =1$, so
  $\mf N=\Span_{i,j} \{f_{ij}\}$
  is a subalgebra of $\mf M(G)\otimes M_n(\C)$ $*$-isomorphic to $M_n(\C)$. Since $*$-isomorphisms preserve norms, 
  $$
  \left\|\sum_{i,j=1}^n \alpha_{ij} f_{ij}  \right\|=
  \left\|\sum_{i,j=1}^n \alpha_{ij} e_{ij}  \right\|,\qquad \alpha_{ij}\in\C.
  $$
  Let $M_\phi\colon \mf M(G)\to\mf M(G)$ be as in \cite[Section~1]{MR784292}.
  Then by (1),
  $$
  \|M_\phi\otimes i_n\|\le k,
  $$
  where $i_n$ is the identity on $M_n(\C)$. But, 
  $$
  M_\phi\lambda(x)=\phi(x)\lambda(x), \qquad x\in G.
  $$ 
  Therefore
  $$
  (M_\phi\otimes i_n)f_{ij}=\phi(x_i^{-1}x_j)f_{ij},
  $$
  and hence for $a_{ij}\in \C$,
  $$
  \left\|\sum_{i,j=1}^n \phi(x_i^{-1}x_j)\alpha_{ij}f_{ij}  \right\|\le k \left\| \sum_{i,j=1}^n \alpha_{ij} f_{ij}\right\|.
  $$
  Since $f_{ij}$ in this inequality can be exchanged by the standard matrix units $e_{ij}$ of $M_n(\C)$, it follows that the matrix $(\phi(x_i^{-1}x_j))_{i,j=1,\ldots,n}$ as well as its transpose has Schur multiplier norm $\le k$.
  
  (2)$\implies$(3): Let $(\mc F, \subseteq)$ be the family of all finite subsets of $G$ ordered by inclusion. Assuming (2), we can for each $F\in\mc F$ find a Hilbert space $H_F$ and bounded maps 
  $$
  \xi_F, \eta_F\colon F\to H_F.
  $$
  with
  $$
  \sup_{x\in F} \|\xi_F(x)\|\le k^{1/2},\qquad \sup_{x\in F}\|\eta_F(x)\|\le k^{1/2},
$$
  and
  $$
  \phi(y^{-1} x) = \langle \xi(x), \eta(y)\rangle,\qquad x,y\in F.
  $$
(3) follows now easily by a standard ultraproduct argument:
Let namely $\mc U$ be a cofinal ultrafilter on $(\mc F,\subseteq)$
(i.e. an ultrafilter that contains all sets of the form $\{F'\in\mc F\mid F'\supseteq F\}$, where $F\in\mc F$),
and let $H_{\mc U}$ be the ultraproduct of $(H_F)_{F\in\mc F}$ corresponding to $\mc U$.
  For $x\in G$, we let $\xi(x)$, and $\eta(x)$ be the elements in $H_{\mc U}$ with representing sequences 
  \begin{eqnarray*}
    &&\xi(x)=(\xi_F(x))_{F\in\mc F}\\
    &&\eta(x)=(\eta_F(x))_{F\in \mc F}
  \end{eqnarray*}
 where we set $\xi_F(x)=\eta_F(x)=0$ if $x\not\in F$. 
 Since for fixed $x,y\in G$,  $x,y\in F$ eventually,
 we have
 $$
 \langle \xi(x), \eta(y)\rangle = \lim_{\mc U} \langle \xi_F(x),\eta_F(y)\rangle = \phi(y^{-1}x),\quad x,y\in G.
 $$
  Moreover, 
 $$
  \sup_{x\in G} \|\xi(x)\|\le k^{1/2},\qquad \sup_{x\in G}\|\eta(x)\|\le k^{1/2},
$$ 
(3)$\implies$(4): Let $\xi,\eta\colon G\to H$ satisfy the conditions in (3).
Let $\xi'(x)=P\xi(x)$, where $P$ is the orthogonal projection on the closed linear span of 
$$
\{\eta(x)\mid x\in G \},
$$
and put $\eta'(y)=Q\eta(y)$, where $Q$ is the orthogonal projection on the closed linear span of $\{\xi'(x)\mid x\in G\}$.
Then
$$
\phi(y^{-1} x) =\langle \xi'(x), \eta(y)\rangle=\langle \xi'(x),\eta'(y)\rangle, \qquad x,y\in H,
$$
and both $\{\xi'(x)\}_{x\in G}$ and $\{\eta'(x)\}_{x\in G}$ are total sets in the Hilbert space $H'=Q(H)$.
There is therefore no loss of generality to assume, that
$\{\xi(x)\}_{x\in G}$ and $\{\eta(x)\}_{x\in G}$ are total in $H$.   
 Since $\phi$ is continuous, the map
 $$
 (x,y)\to \langle \xi(x), \eta(y)\rangle
 $$
is separately continuous, and by the uniform boundedness, and totality of $\{\xi(x)\}_{x\in G}$ and $\{\eta(x)\}_{x\in G}$, it follows that 
$\xi,\eta\colon G\to H$ are continuous from $G$ to $H$ with $\sigma(H,H^*)$-topology. 
Let $\{\xi_i(x)\}_{i\in I}$ and $\{\eta_i(x)\}_{i\in I}$ be the coordinates of $\xi(x)$ and $\eta(x)$ with respect to a fixed basis $(e_i)_{i\in I}$ in $H$.
Then $\xi_i$ and $\eta_i$ are continuous complex valued functions. 
Moreover, for any open relative compact set $U$ in $G$,
$$
\sum_{i\in I} \int_U |\xi_i(x)|^2\,dx \le \int_U \|\xi(x)\|^2\,dx <\infty.
$$
By the continuity of $\xi_i$ and the fact, that no non-empty open set has Haar measure zero, it follows that all except countably many of the $\xi_i$'s vanish on the set $U$, i.e. $\{\xi_i\}_{i\in I}$  is locally countable, and similarly, $\{\eta_i\}_{i\in I}$ is locally countable.

(4)$\implies$(1): For $f\in L^\infty(G)$, we let $m(f)$ denote the multiplication operator $g\mapsto fg$ on $L^2(G)$. For $x\in G$
$$
\lambda(x) m(f)\lambda(x)^{-1} = m (\gamma_x(f)),
$$ 
where 
$\gamma_x(f)(y)=f(x^{-1}y)$. Put now,
$$
a_i=m(\check \xi_i),\quad b_i=m(\check \eta_i),
$$
where $\check g(x)=g(x^{-1})$.
Then $\sum_{i\in I} a_i^*a_i$ and $\sum_{i\in I}b_i^*b_i$ are convergent and
$$\left\|\sum_{i\in I} a_i^* a_i\right\|^{1/2}\left\|\sum_{i\in I} b_i^* b_i\right\|^{1/2} \le \sup_{x\in G} \|\xi(x)\|\cdot \sup_{x\in G} \|\eta(x)\|\le k.$$
Thus, we can define a bounded $\sigma$-weakly continuous map $\Phi$ on $B(L^2(G))$ by
$$
\Phi(s)=\sum_{i\in I} b_i^* s a_i.
$$
Now
\begin{eqnarray*}
  \Phi(\lambda(x))&=&\sum_{i\in I} m(\check \eta_i)^*\lambda(x) m(\check \xi_i)\\
  &=&\sum_{i\in I} m(\check \eta_i)^* m(\gamma_x(\check \xi_i))\lambda(x)\\
  &=&m\left(\sum_{i\in I} \overline{\check \eta_i} \gamma_x(\check \xi_i)\right)\lambda(x).
\end{eqnarray*}
Here we have used that $\{\xi_i\}$ and $\{\eta_i\}$ are locally countable, so pointwise convergence of the sum implies 
$\sigma(L^\infty, L^1)$-convergence. But for $y\in G$,
$$
\sum_{i\in I} \overline{\check \eta_i(y)} (\gamma_x(\check \xi_i)(y))=\sum_{i\in I} \overline{\eta_i(y^{-1})}\xi_i(y^{-1}x)=\langle \xi(y^{-1}x),\eta(y^{-1})\rangle=
\phi(y(y^{-1}x))=\phi(x).
$$

Hence $\Phi(\lambda(x))=\phi(x)\lambda(x)$. This implies that 
$$
\Phi(\mf M(G))\subseteq\mf M(G).
$$
For $\tilde s\in B(L^2(G)) \otimes M_n(\C)$,
$$
(\Phi\otimes i_n)(\tilde s)=\sum_{i\in I} (b_i\otimes 1_n)^*\tilde s (a_i\otimes 1_n).
$$
Hence,
$$
\|\Phi\otimes i_n\|\le \left\|\sum_{i\in I} a_i^*a_i\right\|^{1/2}\left\|\sum_{i\in I} b_i^*b_i\right\|^{1/2}\le k.
$$
Thus $\|\Phi\|_{CB}\le k$, so by \cite[Proposition~1.2]{MR784292}
$$
\phi\in M_0A(G)\quad\text{and}\quad \|\phi\|_{M_0A(G)}\le k.
$$
\end{proof}

\begin{cor}[\cite{MR753889} and \cite{MR0425511}]\label{c:Corollary3.3}
  Let $G_d$ be the group $G$ with discrete topology.
  Then
  $$
  M_0A(G)= M_0A(G_d)\cap C(G),
  $$
  and the $M_0A$-norms on the two spaces coincide.
\end{cor}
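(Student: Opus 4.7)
The plan is to reduce everything to condition (2) of Theorem \ref{t:Theorem3.2}, which is purely an algebraic condition on finite subsets of the group and makes no reference to the topology. The only topological requirement appearing in Theorem \ref{t:Theorem3.2} is the continuity of $\phi$ built into its hypothesis, and this exactly matches the intersection with $C(G)$ on the right-hand side.

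First I would prove the inclusion $M_0A(G) \subseteq M_0A(G_d) \cap C(G)$ with $\|\phi\|_{M_0A(G_d)} \le \|\phi\|_{M_0A(G)}$. Given $\phi \in M_0A(G)$ with $\|\phi\|_{M_0A(G)} \le k$, the function $\phi$ is continuous on $G$ (so it lies in $C(G)$), and applying the implication (1)$\Rightarrow$(2) of Theorem \ref{t:Theorem3.2} to $G$ we obtain
$$\|(\phi(x_j^{-1}x_i))_{i,j=1,\ldots,n}\|_S \le k$$
for every finite subset $\{x_1,\ldots,x_n\} \subseteq G$. Because the underlying set of $G_d$ is the same as that of $G$, these inequalities also hold when $\{x_1,\ldots,x_n\}$ is viewed as a finite subset of $G_d$. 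Since every function on a discrete group is automatically continuous, we may apply (2)$\Rightarrow$(1) of Theorem \ref{t:Theorem3.2} to $G_d$ to conclude $\phi \in M_0A(G_d)$ with $\|\phi\|_{M_0A(G_d)} \le k$.

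The reverse inclusion follows by the same argument run in the opposite direction. Assume $\phi \in M_0A(G_d) \cap C(G)$ with $\|\phi\|_{M_0A(G_d)} \le k$. Applying (1)$\Rightarrow$(2) of Theorem \ref{t:Theorem3.2} to $G_d$ yields the Schur bound $\|(\phi(x_j^{-1}x_i))_{i,j}\|_S \le k$ for every finite subset of $G_d$, hence for every finite subset of $G$. Together with the assumption $\phi \in C(G)$, this verifies the hypotheses of (2)$\Rightarrow$(1) of Theorem \ref{t:Theorem3.2} for $G$, giving $\phi \in M_0A(G)$ with $\|\phi\|_{M_0A(G)} \le k$.

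There is no genuine obstacle here: the corollary is essentially a repackaging of the topology-independence of characterization (2) in Theorem \ref{t:Theorem3.2}. The only point to be careful about is that the definition of $M_0A(G)$ built into condition (1) of that theorem tacitly enforces continuity, which is precisely why one must intersect with $C(G)$ on the right-hand side rather than just take $M_0A(G_d)$.
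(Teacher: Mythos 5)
Your proposal is correct and is exactly the argument the paper intends: its proof of Corollary~\ref{c:Corollary3.3} reads ``Immediate from (1)$\iff$(2),'' and you have simply spelled out the details of applying the equivalence of conditions (1) and (2) of Theorem~\ref{t:Theorem3.2} to both $G$ and $G_d$, observing that condition (2) is topology-independent while the continuity built into condition (1) accounts for the intersection with $C(G)$.
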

\begin{proof}
  Immediate from (1)$\iff$(2).
\end{proof}
\begin{rem}
  It is not hard to see that the functions $\xi,\eta\colon G\to H$ in (3) of Theorem \ref{t:Theorem3.2} can actually be chosen to be continuous:
  
From the proof of (3)$\implies$(4) it follows that $\xi,\eta$ can be chosen such that they are continuous from $G$ to $H$ with respect to the $\sigma(H,H^*)$-topology on $H$, and such that for every open relative compact set $U\subseteq G$, $\xi(U)$ and $\eta(U)$ are contained in a separable subspace of $H$. 
  This implies that $\xi,\eta\in L^\infty(G, H)$. Let now $f\in C_c(G)$ with $\|f\|_2=1$, and define 
  $$
  \hat\xi(x),\hat\eta(x)\in L^2(G,H) \qquad \text{for all } x\in G
  $$
  by 
  \begin{eqnarray*}
    &&\hat\xi(x)(z)=f(z)\xi(zx)\qquad z\in G,\\
    && \hat \eta(x)(z)=f(z)\eta(zx)\qquad z\in G.
  \end{eqnarray*}
  Then it is clear that
  $$
  \sup_{x\in G}\|\hat\xi(x)\|_2\le \sup_{x\in G} \|\xi(x)\|,\qquad  \sup_{x\in G}\|\hat\eta(x)\|_2\le \sup_{x\in G} \|\eta(x)\|.
  $$
  Moreover, 
  \begin{eqnarray*}
    \langle \hat\xi(x),\hat\eta(y)\rangle &=&\int|f(z)|^2\langle \xi(zx),\eta(zy)\rangle dz\\
    &=&\int|f(z)|^2\phi(y^{-1}x) dz\\
&=&\phi(y^{-1}x).
  \end{eqnarray*}
  Finally, using the fact that the group of right translations $(R_x)_{x\in G}$ defined by
$$
(R_x h)(z) = \Delta_G^{1/2}(x) h(zx),\quad h\in L^2(G,H)
$$
acts norm-continuously on $L^2(G,H)$, it is not not hard to check that $x\mapsto\hat\xi(x)$ and $y\mapsto\hat\eta(y)$ are norm continuous from $G$ to $L^2(G,H)$.
\end{rem}

\end{document}